\documentclass[12pt]{article}
\usepackage{amssymb}
\usepackage{amssymb,amsthm}
\usepackage{amsmath}
\usepackage[english]{babel}
\usepackage{cite}
\usepackage{float}
\usepackage [mathscr] {eucal}
\usepackage[]{graphicx}
\usepackage{sectsty}
\usepackage{caption}
\usepackage{subcaption}
\usepackage{lipsum}
\usepackage{hyperref}
\usepackage{xcolor}
\usepackage{lineno}
\hypersetup{
colorlinks = true,
linkcolor = blue,
citecolor = blue
}
\usepackage{hyperref}
\usepackage{xcolor}
\textheight=9.1in \textwidth=6.75in \oddsidemargin=-.1in
\topmargin=-.5in
\def\bc{\begin{center}}
\def\ec{\end{center}}

\def\s2c{\vskip 2cm}
\begin{document}
\renewcommand{\theequation}{\thesection.\arabic{equation}}
\numberwithin{equation}{section}
\def\mysection{\setcounter{equation}{0}\section}
\newtheorem{theorem}{Theorem}[section]
\newtheorem{problem}{Problem}[section]
\newtheorem{remark}[theorem]{remark}
\newtheorem{lemma}{Lemma}[section]
\theoremstyle{definition}
\newtheorem{example}[section]{Example}
\newtheorem{proposition}[theorem]{Proposition}
\newtheorem{Corollary}[theorem]{Corollary}
\newtheorem{definition}[theorem]{Definition}
\newtheorem{property}[]{Property}
\date{}

\title{ \textbf{Finite dimensional approximation to fractional stochastic integro-differential equations with non-instantaneous impulses}}
\author {\textbf{Shahin Ansari and Muslim Malik}\\
School of Mathematical and Statistical Sciences,\\
Indian Institute of Technology Mandi, Kamand 175005, India\\
\textbf{Emails}: shahinansaripz@gmail.com, muslim@iitmandi.ac.in  \\
 \\  }
 \maketitle
 \author
 
 \noindent \textbf{Abstract:} This manuscript proposes a class of fractional stochastic integro-differential equation (FSIDE) with non-instantaneous impulses in an arbitrary separable Hilbert space. We use a projection scheme of increasing sequence of finite dimensional subspaces and projection operators to define approximations. 
In order to demonstrate the existence and convergence of an approximate solution, we utilize stochastic analysis theory, fractional calculus, theory of fractional cosine family of linear operators and fixed point approach. Furthermore, we examine the convergence of Faedo-Galerkin (F-G) approximate solution to the mild solution of our given problem. Finally, a concrete example involving partial differential equation is provided to validate the main abstract results.
\vskip .5cm
\noindent  \textbf{Keywords:} Stochastic fractional differential equation; Non-instantaneous impulses; Cosine family of linear operators; Faedo-Galerkin approximations; Banach fixed point theorem.
\vskip .5cm
 \noindent\textbf{AMS Subject Classification (2020):} 34A08; 60H20; 34G20
 \section{Introduction}
Fractional calculus was started by Newton, Leibniz and L’Hôpital in the second half of the eighteen century. A generalisation of the classical theory is fractional differential equations (FDEs) which allow the order of the involved derivatives to be any complex or real number. 
FDEs have a wide array of applications in engineering and applied science, such as control theory, electricity, aerodynamics, and mechanics. The key advantage of fractional calculus is that it is useful to represent the genetic properties and  memory of several materials and processes.  For more information, we consult the books \cite{N1,N2,N3}. In particular, Shu and Wang \cite{N4} studied the existence of unique mild solution for FDEs of order $1 < \alpha < 2$. The $\alpha$($1 < \alpha \leq 2$) order fractional derivative can be seen in a number of diffusion problems utilised in engineering and physical applications, such as the mechanism of super diffusion \cite{N5}.\par 
 Arbitrary fluctuations or noise are universal and indispensable in man-made systems as well as in nature, hence it is preferable to study the  stochastic systems in place of deterministic systems. Due to environmental noise deterministic system often fluctuates. As a result, dealing with stochastic differential equations is necessary. The differential equations which involve randomness in the mathematical description of certain phenomena are known as stochastic differential equations. Stochastic differential equations find application in many areas including finance \cite{F1}, engineering \cite{F2}, biology \cite{F3,F4} and population dynamics \cite{F5}. One important application of stochastic differential equations occurs in the modelling of problems associated with the percolation of fluid through porous/fractured structures and water catchment. One can see \cite{F6,F7,F8,F9,10} and the references listed therein, for further information on the stochastic system and its applications.\par
In the real world phenomena, the dynamics of processes that involve abrupt and discontinuous jumps are described by impulsive differential equations. In the literature, impulsive systems are comprehensively classified into two kinds; instantaneous impulsive systems (abrupt changes in the system remains active for a small portion of time) and non-instantaneous impulsive systems (the duration of such unforeseen changes persists for a finite time period). However, many practical problems can not characterize by the classical models with instantaneous impulses, for example, in dam pollution models, the principal source of dam pollution is the contaminated river that enters the dam and takes some time to reach the center section of the dam. The non-instantaneous impulses occur as a result of the gradual and continuous processes of the river water entering the dam and the dam water being absorbed afterwards. 
Finally, a new class of abstract semilinear non instantaneous impulsive differential equations was introduced by Hernandez and O’Regan in their work \cite{F10}.\par
It has been noticed that mostly differential and integral equations are not solvable or finding an analytic solution of such problems is difficult. Due to this complexity, a lot of focus has been placed on finding effective methods for obtaining numerical or approximate solutions of such problems. There are various conventional techniques for solving non linear and linear integro-differential equations such as Galerkin method, Chebyshev collocation method, Runge-Kutta method, rationalized Haar functional method, Adomian decomposition method, homotopy perturbation method and others. The F-G approach is among the most efficient technique to find approximate solution of a differential equation in abstract space. This method can be utilised to provide solutions of possibly weaker equations within a variational formulation \cite{F11}.\par
In 1978, by using the Heinz and Wahl's \cite{F12} existence result, Bazley \cite{F13,F14} proved the approximation of solutions to a semilinear wave equation. Then, using the premise that the nonlinear function must be defined on the whole Hilbert space $\mathcal{H}$, Goethel \cite{F15} has demonstrated the similar outcomes for a functional Cauchy problem. Miletta \cite{F16} followed Bazley's idea \cite{F13,F14} and extended the results of Goethel \cite{F15} by relaxing the assumptions imposed by Goethel \cite{F15}. After that, Bahuguna and Srivastava \cite{F17} successfully applied the F-G method to the integro-differential equation. In 2010, Balasubramaniam et al. \cite{F18} performed a novel work on F-G approximate solution of a semilinear stochastic integro-differential equation in Hilbert space.
 In \cite{F19}, Muslim established the F-G approximate results to fractional integro-differential equation with deviated arguments. Recently, Chaudhary et al. \cite{F20} used Caputo fractional derivative
and discussed some results for the F-G approximate solution to a stochastic system.
 Muslim and Agarwal \cite{F21} have worked on the F-G approximations of solutions to impulsive functional differential equations. Other works on F-G approximation to FDEs are published at \cite{F23,F24}.\par 
The study of stochastic processes involving jumps, and more specifically Levy processes, naturally leads to the development of integral differential equations. Integro-differential equations and non-linear FDEs have garnered significant interest as a research area owing to their diverse applications in traffic modeling, earthquake oscillations, and geometry. The major goal of our research is to use stochastic integro-differential equations in infinite dimensions, Caputo fractional derivative, fixed point technique, sine and cosine family of operators and, in particular, F-G approximation technique to evaluate the approximate solution for our proposed system. From the literature survey, we come to a conclusion that a number of excellent manuscripts have been published on the F-G approximation of solutions for instantaneous impulsive differential equations but there doesn't exists a single research work on approximation of solutions to non-instantaneous impulsive fractional stochastic differential system. Thus, through the work commented above and in order to make a contribution with new consequences, we are encouraged to propose a study for the existence, uniqueness and convergence for a class of approximate solutions of the non-instantaneous impulsive FSIDE of order (1,2], which is of the form 
 \begin{align}\label{main1}
 ^{c}\mathbf{D}^{\alpha}_sy(s) &= \mathcal{A}y(s)+ \mathcal{K}(s,y(s))+ \int_0^s a(s-\varsigma) \mathcal{N}(\varsigma,y(\varsigma)) d{W}(\varsigma),  \quad s \in (\varsigma_k,s_{k+1}], \nonumber \\
 &\qquad\qquad\qquad\qquad\qquad k = 0,1,2,\cdots,q, \nonumber\\
y(s) &= h_{k}^1(s,y(s_k^-)), \quad s \in (s_k,\varsigma_k],\ k = 1,2,\cdots,q, \nonumber \\
y^{\prime}(s) &= h_{k}^2(s,y(s_k^-)), \quad s \in (s_k,\varsigma_k],\ k = 1,2,\cdots,q, \\
y(0) &= y_0, \qquad y^{\prime}(0) = z_{0} \nonumber 
\end{align}
 where, the state $y(\cdot)$ is an $\mathcal{H}$-valued stochastic process; $^{c}\mathbf{D}^{\alpha}_s$ denotes the Caputo fractional derivative of order $ \alpha $; $\mathcal{A}$ is the infinitesimal generator of a strongly continuous $ \alpha $-order cosine family $\{C_{\alpha}(s) : s \geq 0\}$ on $\mathcal{H}$; $\mathcal{K}$ and $\mathcal{N}$ are non-linear functions satisfying few conditions that will be specified later; the map $a \in L_{\text{loc}}^{2p}(0,\infty)$ for some $1 \leq p < \infty$ and and $\varsigma_k, s_k$ are some pre-fixed numbers which satisfy the relation $0 = \varsigma_0 = s_0 < s_1 < \varsigma_1 < s_2 < \cdots < \varsigma_k < s_{k+1} = T$. $\{ W(s) \}_{s \geq 0}$ is the Wiener process or Brownian motion in a separable Hilbert space $\mathcal{Y}$.\vspace{2mm}
 
The following are the major contributions to our manuscript:\vspace{1mm}\\
$\bullet$ To the best of author's knowledge, this is the first work that deals the finite dimensional approximation to a class of nonlinear non-instantaneous impulsive FSIDE of order $1 < \alpha \leq 2$ in abstract space.\vspace{1.25mm}\\
$\bullet$ The F-G approximation results for the non-instantaneous impulsive system are discussed.\vspace{1.25mm}\\
$\bullet$ The conversion of a stochastic partial differential equation with impulsive conditions to an abstract stochastic differential system in infinite dimensional space is shown in the example section.\vspace{3mm}\\
The remainder of the paper is structured as follows:\vspace{0.25mm}\\
Section 2 compiles preliminary results, relevant notations and definitions that will be utilized throughout the paper. In section 3, by the virtue of stochastic theory and fixed point technique, we examine the existence of unique solution for every approximate integral equation. In section 4, we establish the convergence of the approximate solutions. Section 5 focus on the main results of this manuscript pertaining to the F-G approximate solution and convergence of such an approximation. At last, in section 6, an example is
provided by applying the obtained results to a partial differential equation.

\section{Preliminaries}
 Some definitions and mathematical techniques that will be required to prove main results of this manuscript are presented in this section.\par
Let $L(\mathcal{Y};\mathcal{H})$ be the space of all bounded linear operators from $\mathcal{Y}$ into $\mathcal{H}$. Notation $\lVert \cdot \rVert$ represents the norms of the spaces $\mathcal{H}$, $\mathcal{Y}$ and $L(\mathcal{Y};\mathcal{H})$. Throughout this paper $B_R(\mathcal{Z})$ denotes the closed ball with center at zero and radius $R > 0$ in a Banach space $\mathcal{Z}$. Let $(\Omega,\mathcal{F},\{\mathcal{F}_s\}_{s \geq 0},\mathrm{P})$ be filtered complete probability space such that the filtration $\{\mathcal{F}_s\}_{s \geq 0}$ is an increasing family of right continuous sub $\sigma-$ algebra of $\mathcal{F}$. An $\mathcal{F}-$measurable  function $y(s) : \Omega \rightarrow \mathcal{H}$ is a $\mathcal{H}-$valued random variable. A stochastic process is a collection of random variables $S = \{y(s) : \Omega \rightarrow \mathcal{H}| s \in [0,T]\}$. Let $\{e_j \}_{j \geq 1}$ is a complete orthonormal basis of the space $\mathcal{Y}$ and $ \lambda_j, (j \in \mathbb{N})$ are non-negative real numbers. For arbitrary $s \geq 0$, $W$ has the expansion
\begin{equation*}
 W(s) = \sum_{j = 1}^{\infty} \sqrt{\lambda_j}  e_j \beta_j(s), \quad
\end{equation*} 
where, $\beta_j(s) = \frac{1}{\sqrt{\lambda_j}}\langle W(s),e_j \rangle_{\mathcal{Y}}$ is a sequence of Brownian motions which are mutually independent and real valued. The reader may refer \cite{F8} for more details of this section. Moreover, the linear non-negative covariance operator $Q \in L(\mathcal{Y};\mathcal{Y})$ is symmetric and defined by $Qe_j = \lambda_j e_j$ and of finite trace $Tr(Q) = \sum\limits_{j = 1}^{\infty} \lambda_j < \infty$.   
 Then the preceding $\mathcal{Y}$-valued Weiner process $W(s)$ is called $Q-$Wiener process. Let $\mathcal{Y}_0 = Q^{\frac{1}{2}}(\mathcal{Y})$ be the closed subspace of $\mathcal{Y}$ and the space of all Hilbert Schmidt operators represented by $L_2^0 = L_2(\mathcal{Y}_0;\mathcal{H})$, is a separable Hilbert space endowed with the norm $\lVert \psi \rVert_{L_{2}^0}^2 = Tr((\psi Q^{\frac{1}{2}})(\psi Q^{\frac{1}{2}})^{\ast})$, for $\psi \in L_2^0$. Moreover, for any $ \chi \in L(\mathcal{Y};\mathcal{H})$ this norm reduces to $\lVert \chi \rVert_{L_2^0}^2 = Tr(\chi Q {\chi}^{\ast}) = \sum_{j = 1}^{\infty} \lVert \sqrt{\lambda_j} \chi e_j \rVert^2$. \\
For further development, we make use of the following results:\\
\begin{lemma}\label{lemma}
\cite{6} Let $\zeta(\cdot)$ be a measurable $L_2^0$-valued process satisfying $\mathbb{E} \int_0^T \lVert \zeta(\varsigma) \rVert^2_{L_2^0} d{\varsigma} < \infty$ then
\begin{align*}
\mathbb{E} \big[\sup_{0 \leq s \leq T} \lVert \int_0^s \zeta(\varsigma) dW(\varsigma) \rVert^2 \big]  &\leq 4 \mathbb{E} \lVert \int_0^T \zeta(\varsigma) dW(\varsigma) \rVert^2\\
& \leq 4 Tr(Q) \int_0^T \mathbb{E} \lVert \zeta(\varsigma) \rVert_{L_2^0}^2 d\varsigma.
\end{align*}

\end{lemma}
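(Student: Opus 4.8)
The plan is to recognize the process $M(s):=\int_0^s\zeta(\varsigma)\,dW(\varsigma)$, $s\in[0,T]$, as an $\mathcal{H}$-valued continuous square-integrable martingale with respect to the filtration $\{\mathcal{F}_s\}_{s\ge 0}$, and then to read off the first inequality from Doob's maximal inequality and the second from the It\^o isometry for stochastic integration against the $Q$-Wiener process $W$. Under the standing hypothesis $\mathbb{E}\int_0^T\lVert\zeta(\varsigma)\rVert^2_{L_2^0}\,d\varsigma<\infty$, the integral $M$ is constructed in the usual two steps: first for simple $L_2^0$-valued integrands, for which the martingale property and the isometry $\mathbb{E}\lVert M(T)\rVert^2=\mathbb{E}\int_0^T\lVert\zeta(\varsigma)\rVert^2_{L_2^0}\,d\varsigma$ are checked directly from the mutual independence of the real Brownian motions $\beta_j$ and the covariance relations $Qe_j=\lambda_j e_j$; and then for an arbitrary measurable $\zeta$ by density in $L^2(\Omega\times[0,T];L_2^0)$, the martingale property, the existence of a continuous modification, and the isometry all passing to the limit.

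Granting this set-up, the first inequality is Doob's $L^p$ maximal inequality with $p=2$ applied to the nonnegative continuous submartingale $s\mapsto\lVert M(s)\rVert$ (a submartingale because $\lVert\cdot\rVert$ is convex), which yields $\mathbb{E}\big[\sup_{0\le s\le T}\lVert M(s)\rVert^2\big]\le\big(\tfrac{p}{p-1}\big)^p\,\mathbb{E}\lVert M(T)\rVert^2=4\,\mathbb{E}\lVert M(T)\rVert^2$. For the second inequality the It\^o isometry gives $\mathbb{E}\lVert M(T)\rVert^2=\mathbb{E}\int_0^T\lVert\zeta(\varsigma)\rVert^2_{L_2^0}\,d\varsigma$; the constant $Tr(Q)$ appearing in the form quoted from \cite{6} is the (non-sharp) factor produced when the integrand is estimated against the operator norm via $\lVert\psi\rVert^2_{L_2^0}=Tr\big((\psi Q^{1/2})(\psi Q^{1/2})^{\ast}\big)\le Tr(Q)\,\lVert\psi\rVert^2$, the identity recorded in the preliminaries. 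Combining the two estimates gives the chain asserted in the lemma.

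I do not expect the inequalities themselves to present any difficulty; the only point requiring care is the underlying construction — showing that $M$ is a genuine $\mathcal{H}$-valued martingale admitting a continuous modification (so that $\sup_{0\le s\le T}$ is meaningful in Doob's inequality) and that the It\^o isometry survives the approximation of a general measurable $\zeta$ by simple processes under only the finite second-moment assumption. Since the statement is taken verbatim from \cite{6}, in the write-up it is enough to cite that reference for these standard facts on Hilbert-space-valued stochastic integration and to assemble the two bounds as above.
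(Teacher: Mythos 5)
The paper does not prove this lemma at all: it is imported verbatim from the cited reference (Ichikawa) and used as a black box, so there is no in-paper argument to compare yours against. Your sketch is the standard and correct route: the first bound is Doob's $L^{2}$ maximal inequality (constant $(p/(p-1))^{p}=4$ for $p=2$) applied to the nonnegative continuous submartingale $s\mapsto\lVert M(s)\rVert$, and the second comes from the It\^o isometry for the $Q$-Wiener integral, with the construction of $M$ and the passage to general integrands handled by approximation with simple processes. One small point worth making explicit in a write-up: as the lemma is literally stated, with $\lVert\zeta\rVert^{2}_{L_{2}^{0}}$ on the right-hand side, the isometry already gives \emph{equality} $\mathbb{E}\lVert M(T)\rVert^{2}=\int_{0}^{T}\mathbb{E}\lVert\zeta(\varsigma)\rVert^{2}_{L_{2}^{0}}\,d\varsigma$, so the extra factor $Tr(Q)$ is redundant (and the stated inequality is only guaranteed when $Tr(Q)\geq 1$); the factor genuinely belongs to the variant in which the right-hand side carries the operator norm, via $\lVert\psi\rVert^{2}_{L_{2}^{0}}\leq Tr(Q)\,\lVert\psi\rVert^{2}$. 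You flag exactly this, which is the only subtlety in the statement; the rest of your proposal is routine and sound, and citing the reference for the Hilbert-space stochastic-integration background is what the paper itself does.
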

\begin{definition}
The $\alpha( > 0)$ order Riemann-Liouville fractional integral is defined by:
\begin{equation*}
J_s^{\alpha} x(s) = \frac{1}{\Gamma{\alpha}} \int_0^s (s-\varsigma)^{\alpha-1}x(\varsigma)d{\varsigma}, \quad s > 0
\end{equation*}
where, $\Gamma(\cdot)$ is the gamma function and $x(s) \in L^1([0,T];\mathcal{H})$.
\end{definition}
\begin{definition}
The $\alpha \in (1,2]$ order Riemann-Liouville fractional derivative of a function $x(s) \in L^1([0,T];\mathcal{H})$
is defined by:
\begin{equation*}
D^{\alpha}_s x(s) = \frac{d^2}{ds^2}J_s^{2-\alpha}x(s).
\end{equation*}
\end{definition}
\begin{definition}
The $\alpha \in (1,2]$ order Caputo fractional derivative of a function $x(s) \in C^1([0,T];\mathcal{H}) \cap L^1([0,T];\mathcal{H})$ is defined by:
\begin{equation*}
^{c}{D}_s^{\alpha}x(s) = J_s^{2-\alpha} \frac{d^2}{ds^2}x(s).
\end{equation*}
\end{definition}
Consider the following $\alpha \in (1,2]$ order differential problem of the form:
\begin{align}\label{2.1}
^{c}{D}_s^{\alpha}x(s) &= \mathcal{A}x(s)\nonumber \\
x(0) &= z_0, \quad x^{\prime}(0) = 0
\end{align}
where, $\mathcal{A} : D(\mathcal{A}) \subset \mathcal{H} \rightarrow \mathcal{H}$ is densely defined closed linear operator on a separable Hilbert space $\mathcal{H}$.
\begin{definition}
\cite{1} A one parameter family $(C_{\alpha}(s))_{s \geq 0} \subset L(\mathcal{H};\mathcal{H})$, $1 < \alpha \leq 2,$ is called a strongly continuous fractional cosine family for (\ref{2.1}) or the solution operator if the following conditions satisfy:\vspace{2mm}\\
(i) $C_{\alpha}(0) = I$, where I denotes the identity operator and $C_{\alpha}(s)$ is strongly continuous;\\
(ii) $C_{\alpha}(s)D(\mathcal{A}) \subset D(\mathcal{A})$ and $\mathcal{A}C_{\alpha}{z_0} = C_{\alpha}\mathcal{A}{z_0}$ for all $z_0 \in D(\mathcal{A})$, $s \geq 0$;\\
(iii) $C_{\alpha}(s)z_0$ is the solution of $x(s) = z_0 + \frac{1}{\Gamma \alpha} \int_0^s (s-\varsigma)^{\alpha-1}\mathcal{A}x(\varsigma)d\varsigma$ for all $z_0 \in D(\mathcal{A})$. 
\end{definition}

\begin{definition}
$S_{\alpha} : [0,\infty) \rightarrow L(\mathcal{H};\mathcal{H})$ denotes the fractional sine family associated with $C_{\alpha}$ is given by:
\begin{equation*}
S_{\alpha}(s) = \int_0^s C_{\alpha}(t)dt, \quad s \geq 0.
\end{equation*}
\end{definition}
\begin{definition}
$P_{\alpha} : [0,\infty) \rightarrow L(\mathcal{H};\mathcal{H})$ denotes the $\alpha$-order Riemann-Liouville family associated with $C_{\alpha}$ is given by
\begin{equation*}
P_{\alpha}(s) = J_s^{\alpha-1} C_{\alpha}(s).
\end{equation*}
\end{definition}
\begin{definition}
The fractional cosine family $C_{\alpha}(s)$ is said to be exponentially bounded if there exists constants $\omega \geq 0$ and $M \geq 1$ such that 
\begin{equation}\label{2.2}
\lVert C_{\alpha}(s) \rVert \leq M e^{\omega s}, \quad s \geq 0.
\end{equation}
\end{definition}
If problem (\ref{2.1}) has a solution operator $C_{\alpha}(s)$ satisfying (\ref{2.2}) then an operator $\mathcal{A}$ is referred to as belongs to $C^{\alpha}(\mathcal{H};M,\omega)$. Let $1 < \alpha \leq 2$ and $\mathcal{A} \in C^{\alpha}(\mathcal{H}; M,\omega)$, therefore by Theorem (3.3) in \cite{1}, -$\mathcal{A}$ be infinitesimal generator of an analytic semigroup and hence we can define a closed linear operator of fractional power $\mathcal{A}^{\beta}$ for $\beta \in (0,1]$ on its domain $D(\mathcal{A}^{\beta})$ such that $\overline{D(\mathcal{A}^{\beta})} = \mathcal{H}$. $\mathcal{H}_{\beta}$ stands for $D(\mathcal{A}^{\beta})$ forms a Banach space endowed with the norm $\lVert y \rVert_{\beta} = \lVert \mathcal{A}^{\beta}y \rVert$, for any $y \in D(\mathcal{A}^{\beta})$. For more knowledge about fractional power of operators and analytic semigroup, inspect \cite{3,S1}.\par
Let $L_2(\Omega,\mathcal{F},P;\mathcal{H}_{\beta}) \equiv L_2(\Omega;\mathcal{H}_{\beta})$ denotes the space of all square integrable, strongly measurable $\mathcal{H}_{\beta}$-valued random variables, which is a Banach space endowed with the norm 
\begin{equation*}
\lVert z \rVert_{L_2(\Omega;\mathcal{H}_{\beta})} = \big(\int_{\Omega} \lVert z(s) \rVert^2_{\beta}\ dP\big)^{\frac{1}{2}}=(\mathbb{E} \lVert z(s) \rVert^2_{{\beta}})^\frac{1}{2}
\end{equation*}
where, $\mathbb{E}$ denotes integration with respect to probability measure P i.e. $\mathbb{E}z = \int_{\Omega}z dP$.  \\
In order to deal the impulses, we consider the space of piecewise continuous functions \\$PC([0,T];L_2(\Omega;\mathcal{H}))$ formed by $L_2(\Omega;\mathcal{H})$-valued stochastic processes $\{y(s) : s \in [0,T] \}$ such that $y(s)$ is continuous at $s \neq s_k$, $y(s)$ is left continuous on $[0,T]$ and $\lim_{s \downarrow s_k}y(s)$ exists for all $k = 1, 2,\cdots,q$ which forms a Banach space endowed with the sup norm $\lVert y \rVert_{PC} = \bigg(\sup\limits_{ 0 \leq s \leq T} \mathbb{E} \lVert y(s) \rVert^2 \bigg)^{\frac{1}{2}}$.\\
Also, $PC_T^{\beta}=PC([0,T];L_2(\Omega;\mathcal{H}_{\beta}))$ be the Banach space formed by $L_2(\Omega;\mathcal{H}_{\beta})$-valued stochastic processes $\{y(s) : s \in [0,T] \}$ such that $y(s)$ is continuous at $s \neq s_k$, $y(s)$ is left continuous on $[0,T]$ and $\lim_{s \downarrow s_k}y(s)$ exists for all $k = 1, 2,\cdots,q$ which forms a Banach space with the sup norm $\lVert y \rVert_{PC, \beta} = \bigg(\sup\limits_{0 \leq s \leq T} \mathbb{E} \lVert \mathcal{A}^{\beta}y(s) \rVert^2 \bigg)^{\frac{1}{2}}$.\\
We impose the following assumptions in order to prove main results.\vspace{2mm}\\
\textbf{(A1):} $\mathcal{A} : D(\mathcal{A}) \subset \mathcal{H} \rightarrow \mathcal{H}$ is self-adjoint, positive definite, closed linear operator  such that $\overline{D(\mathcal{A})} = \mathcal{H}$. Suppose that, the pure point spectrum of $\mathcal{A}$ is\\
\begin{equation*}
0 < \lambda_{0} \leq \lambda_{1} \leq \lambda_{2} \leq \cdots \leq \lambda_{n} \leq \cdots
\end{equation*}
where $\lambda_{n} \rightarrow \infty$ as $n \rightarrow \infty$ and an orthonormal system of eigenfunctions $\psi_{j}$ is complete, which is corresponding to $\lambda_j$ i.e.
\begin{equation*}
\mathcal{A} \psi_{j} = \lambda_{j}\psi_{j} \quad \text{and} \quad \langle  \psi_{i} , \psi_{j} \rangle = \delta_{ij},
\end{equation*}
 where $\delta_{ij}$ denotes the Kronecker Delta function.\\
 Also
 \begin{equation}
 \lVert C_{\alpha}(s) \rVert \leq M \nonumber.
 \end{equation}\vspace{2mm}
\textbf{(A2):} The function $\mathcal{K} : [0,T] \times \mathcal{H}_{\beta} \rightarrow \mathcal{H}$ is nonlinear and fulfils the requirement
\begin{equation*}
\mathbb{E} \lVert \mathcal{K}(s,y) - \mathcal{K}(s,x) \rVert^2 \leq L_\mathcal{K} \mathbb{E} \lVert y-x \rVert^2_{\beta}
\end{equation*}
and $\mathbb{E}\lVert \mathcal{K}(s,y) \rVert^2 \leq L_\mathcal{K}^{\prime}, \  s \in [0,T], \ y \in B_R(\mathcal{H}_{\beta}),$
where $B_R(\mathcal{H}_{\beta}) = \{y \in \mathcal{H}_{\beta} : \lVert y \rVert_{\beta} \leq R \}$ and $L_\mathcal{K},L_\mathcal{K}^{\prime}$(depending on R) are positive constants.\vspace{2mm} \\
\textbf{(A3):} The nonlinear map $\mathcal{N} : [0,T] \times \mathcal{H}_{\beta} \rightarrow L_2^0$ and there exist non-negative functions $L_\mathcal{N},L_\mathcal{N}^{\prime}(\text{depending on R}) \in L_{loc}^q(0,\infty)$, where $1<q<\infty, \ \frac{1}{p}+\frac{1}{q}=1$ such that
\begin{equation*}
\mathbb{E}\lVert \mathcal{N}(s,y)-\mathcal{N}(s,x) \rVert^2_{L_2^0} \leq L_\mathcal{N}(s) \mathbb{E}\lVert y-x \rVert^2_{\beta}
\end{equation*}
and 
\begin{equation*}
\mathbb{E}\lVert \mathcal{N}(s,y) \rVert^2_{L_2^0} \ \leq \ L_\mathcal{N}^{\prime}(s), \ s \in [0,T], \ y \in B_R(\mathcal{H}_{\beta}).\vspace{2mm}\\
\end{equation*}
\textbf{(A4):} The functions $h_k^i : (s_k,\varsigma_k] \times \mathcal{H}_{\beta} \rightarrow \mathcal{H}_{\beta}$, $i =1,2, \ k = 1,2,\cdots,q$ are continuous and there exist constants $D_{h_k^i},C_{h_k^i}(\text{depends on R}) > 0$, $k = 1,2,\cdots,q, \ i =1,2$ such that
\begin{equation*}
\mathbb{E}\lVert h_{k}^i(s,y) - h_{k}^i(s,x) \rVert^2_{\beta} \leq D_{h_k^i} \mathbb{E}\lVert y-x \rVert^2_{\beta} 
\end{equation*}
also
\begin{equation*}
\mathbb{E}\lVert h_{k}^i(s,x) \rVert^2_{\beta} \leq C_{h_k^i}, \ \forall x \in B_R(\mathcal{H}_{\beta}).
\end{equation*}
\vspace{2mm}
\\
For the notational convenience, we denote\\
$ \rho = \sup\limits_{0 \leq s \leq T} \lVert \mathcal{A}P_{\alpha}(s) \rVert; \quad D = \max\limits_{1 \leq k \leq q}\bigg\{\max\limits_{0 \leq k \leq q} Q_{k}, D_{h^1_k}\bigg\},\ k = 1,2,\cdots,q;$ \vspace{2mm}\\
$N_{0} = 4\{M^2 \lVert y_0 \rVert^2_{\beta} + M^2T^2\lVert z_0 \rVert^2_{\beta} + \lVert \mathcal{A}^{\beta-1} \rVert^2 \rho^2 s_1 (L_{\mathcal{K}}^{\prime} + 4Tr(Q)\lVert a^2 \rVert_{L^p(0,T)} \lVert {L_\mathcal{N}^{\prime}} \rVert_{L^q(0,T)})\}$;\vspace{2mm}\\ 
$N_{k}=4\{M^2 C_{h_k^1}+M^{2}T^{2}C_{h_k^2}+\lVert \mathcal{A}^{\beta-1} \rVert^2 \rho^2 s_{k+1}({L_\mathcal{K}^{\prime}}+ 4Tr(Q)\lVert a^2 \rVert_{L^p(0,T)} \lVert {L_\mathcal{N}^{\prime}}\rVert_{L^q(0,T)})\},\\ 
\qquad\qquad\qquad\qquad\qquad\qquad k = 1,2,\cdots,q$;\vspace{2mm}\\
$Q_0 = 2\{\lVert \mathcal{A}^{\beta-1} \rVert^2 \rho^2 s_1(L_\mathcal{K} + 4Tr(Q) \lVert {L_\mathcal{N}} \rVert_{L^q(0,T)} \lVert a^2 \rVert_{L^p(0,T)})\};$\vspace{2mm}\\
$Q_{k} = 4\{M^2 D_{h_k^1}+M^2T^2D_{h_k^2}+\lVert \mathcal{A}^{\beta-1} \rVert^2 \rho^2 s_{k+1}(L_{\mathcal{K}}+4Tr(Q) \lVert L_\mathcal{N} \rVert_{L^q(0,T)} \lVert a^2 \rVert_{L^p(0,T)})\}, \\ k = 1,2,\cdots,q.$\vspace{2mm}
\begin{definition}
An $\mathcal{F}_s$-adapted stochastic process $\{y(s) : s \in [0,T] \}$ is said to be a mild solution of the stochastic fractional system (\ref{main1}) if for any $s \in [0,T]$, $y(s)$ fulfils $y(0) = y_0$, $y^{\prime}(0) = z_0$ and 
\begin{eqnarray}\label{2.3}
y(s) &=& h_k^1(s,y(s_k^-)), \ s \in (s_k,\varsigma_k],  k = 1,2,\cdots,q; \nonumber \\
y^{\prime}(s) &=& h_k^2(s,y(s_k^-)), \ s \in (s_k,\varsigma_k],  k = 1,2,\cdots,q;\nonumber \\
y(s) &=& C_{\alpha}(s)y_0 + S_{\alpha}(s)z_0 + \int_0^s P_{\alpha}(s-\varsigma)[\mathcal{K}(\varsigma,y(\varsigma))\nonumber \\ 
&\quad & +\int_0^{\varsigma} a(\varsigma-r)\mathcal{N}(r,y(r)) dW(r)]d{\varsigma}, \ s \in (0,s_1];\\
y(s) &=& C_{\alpha}(s-\varsigma_k)h_{k}^1(\varsigma_k,y(s_k^-))+S_{\alpha}(s-\varsigma_k)h_{k}^2(\varsigma_k,y(s_k^-)) \nonumber \\
&\quad &+\int_{\varsigma_k}^s P_{\alpha}(s-\varsigma)[\mathcal{K}(\varsigma,y(\varsigma))+\int_0^\varsigma a(\varsigma-r)\mathcal{N}(r,y(r)) dW(r)]d{\varsigma}, s \in (\varsigma_k,s_{k+1}], \nonumber \\
& \quad &\qquad\qquad\qquad\qquad\qquad k = 1,2,\cdots,q.\nonumber
\end{eqnarray}
\end{definition}
\section{Existence of approximate solutions}
 The existence outcomes for fractional stochastic system (\ref{main1}) are proved in this section.\\
 We define $\mathcal{H}_n = \text{span}\{\psi_0, \psi_1, \psi_2,\cdots,\psi_n\} \subset \mathcal{H}$. Clearly, $\mathcal{H}_n$ is a subspace of $\mathcal{H}$ of finite dimension and let $\mathcal{P}^n : \mathcal{H} \rightarrow \mathcal{H}_n$ be the corresponding projection operator associated with the spectral set of the operator $\mathcal{A}$ for $n \in \mathbb{N}_0$.\\
We define
\begin{align*}
\mathcal{K}_{n}: [0,T] \times \mathcal{H}_{\beta} \rightarrow \mathcal{H}
\end{align*}
such that
\begin{align*}
\mathcal{K}_{n}(s,x(s)) = \mathcal{K}(s,\mathcal{P}^{n}x(s)),
\end{align*}
and
\begin{equation*}
h_{\text{k,n}}^i : (s_{k}, \varsigma_k] \times \mathcal{H}_{\beta} \rightarrow \mathcal{H}_{\beta}, \quad i = 1,2
\end{equation*}
such that
\begin{equation*}
h_{\text{k,n}}^i(s,x(s)) = h_{k}^i(s,\mathcal{P}^{n}x(s))
\end{equation*}
and 
\begin{equation*}
\mathcal{N}_{n} : [0,T] \times \mathcal{H}_{\beta} \rightarrow L_2^0
\end{equation*}
such that
\begin{equation*}
\mathcal{N}_{n}(s,x(s)) = \mathcal{N}(s,\mathcal{P}^nx(s)).
\end{equation*}
Consider a subset $B_R(PC_T^{\beta}) \subseteq PC_T^{\beta}$ defined as
\begin{equation*}
B_R(PC_T^{\beta}) = \{ z \in PC_T^{\beta} : \lVert z \rVert_{PC,\beta} \leq R \},
\end{equation*} 
where
\begin{equation*}
R^2 = \max\limits_{1 \leq k \leq q}\bigg(\max\limits_{0 \leq k \leq q} N_k, C_{h_k^1} \bigg).
\end{equation*}
The inequality of the form $(\sum_{j = 1}^n c_j)^m \leq n^{m-1} \sum_{j = 1}^n  c_j^m$ (where $c_j$ are nonnegative constants $j = 1, 2,\cdots,n$ and $n,m \in \mathbb{N}$) is helpful in producing various estimates along with the well-known Hölder and Minkowski inequalities.\\
We define the maps $\phi_n$ on $B_R(PC_T^{\beta})$, $n \in \mathbb{N}$, as follows,
\begin{equation*}
(\phi_{n}y)(s)=\begin{cases}
C_{\alpha}(s)y_0 + S_{\alpha}(s)z_0 + \int_0^s P_{\alpha}(s-\varsigma)[\mathcal{K}_n(\varsigma,y(\varsigma))\\
\qquad\qquad\qquad\qquad+\int_0^{\varsigma} a(\varsigma-r)\mathcal{N}_n(r,y(r)) dW(r)]d{\varsigma}, \ s \in (0,s_1]\\
 h_{k,n}^1(s,y(s_k^-)), \quad s \in (s_k,\varsigma_k], \ k = 1,2,\cdots,q, \\
  C_{\alpha}(s-\varsigma_k)h_{k,n}^1(\varsigma_k,y(s_k^-))+S_{\alpha}(s-\varsigma_k)h_{k,n}^2(\varsigma_k,y(s_k^-)) \\
 \qquad\quad  + \int_{\varsigma_k}^s P_{\alpha}(s-\varsigma)[\mathcal{K}_n(\varsigma,y(\varsigma)) +\int_0^\varsigma a(\varsigma-r)\mathcal{N}_n(r,y(r)) dW(r)]d\varsigma, \\
 \qquad\qquad\qquad\qquad\qquad\qquad\qquad\qquad s \in (\varsigma_k,s_{k+1}], \ k = 1,2,\cdots,q,
\end{cases}
\end{equation*}
for any $y \in B_R(PC_T^{\beta})$.
\begin{theorem}\label{thrm3.1}
Let $y_0,z_0 \in D(\mathcal{A})$ and all the hypotheses \textbf{(A1)-(A4)} along with $D < 1$ hold. Then, the system (\ref{main1}) has a unique mild solution $y_{n} \in B_R(PC_T^{\beta})$ such that $\phi_n y_n = y_n$, for each $n \in \mathbb{N}$. 
\begin{proof}
The proof is split up into two steps for ease of understanding.
\vspace{2mm}
\\
\textbf{Step 1:} First we demonstrate that $\phi_n : B_R(PC_T^{\beta}) \rightarrow B_R(PC_T^{\beta})$ is well defined before applying fixed point theorem.\\
For any $y \in B_R(PC_T^{\beta})$ and $s \in (0,s_1]$, we have
\begin{align} \label{3.1}
\mathbb{E} \lVert (\phi_n y)(s) \rVert_{\beta}^2 &\leq 4 \bigg[ \lVert C_{\alpha}(s)y_0 \rVert^2_{\beta} + \lVert S_{\alpha}(s)z_0 \rVert^2_{\beta} + \lVert \mathcal{A}^{\beta-1} \rVert^2 \int_0^s \lVert \mathcal{A}P_{\alpha}(s-\varsigma) \rVert^2 \nonumber \\
& \quad \times \bigg\{ \mathbb{E} \lVert \mathcal{K}_n(\varsigma,y(\varsigma)) \rVert^2 \nonumber + 4 Tr(Q) \int_0^T |a(\varsigma-r)|^2 \mathbb{E} \lVert \mathcal{N}_n(r,y(r)) \rVert^2_{L_2^0} dr \bigg\} d{\varsigma} \bigg] \nonumber \\
& \leq 4 \bigg[M^2 \lVert y_0 \rVert^2_{\beta} + M^2 T^2 \lVert z_0 \rVert^2_{\beta} + \lVert \mathcal{A}^{\beta-1} \rVert^2 \rho^2 s \nonumber \\
& \quad \times \bigg\{L_{\mathcal{K}}^{\prime} + 4Tr(Q) \lVert a^2 \rVert_{L^p(0,T)} \lVert {L_\mathcal{N}^{\prime}} \rVert_{L^q(0,T)} \bigg\} \bigg] \nonumber \\
&  \leq N_0.
\end{align}
Similarly, for any $s \in (s_k,\varsigma_k], \ k = 1,2,\cdots,q$, we can easily get
\begin{equation} \label{3.2}
\mathbb{E} \lVert (\phi_n y)(s) \rVert_{\beta}^2 = \mathbb{E} \lVert h_{k,n}^1(s,y(s_k^-)) \rVert_{\beta}^2 \leq C_{h_k^1}.
\end{equation}
Now, for any $s \in (\varsigma_k,s_{k+1}], \ k = 1,2,\cdots,q$ and $y \in B_R(PC^{\beta}_T)$, we have
\begin{align} \label{3.3}
\mathbb{E} \lVert (\phi_n y)(s) \rVert_{\beta}^2 &\leq 4 \bigg[ \lVert C_{\alpha}(s-\varsigma_k) \rVert^2 \mathbb{E} \lVert h_{k,n}^1(\varsigma_k,y(s_k^-)) \rVert^2_{\beta} + \lVert S_{\alpha}(s-\varsigma_k) \rVert^2   \nonumber \\
& \quad \times \mathbb{E} \lVert h_{k,n}^2(\varsigma_k,y(s_k^-)) \rVert^2_{\beta}+ \lVert \mathcal{A}^{\beta-1} \rVert^2 \int_{\varsigma_{k}}^s \lVert \mathcal{A}P_{\alpha}(s-\varsigma) \rVert^2 \bigg\{ \mathbb{E} \lVert \mathcal{K}_n(\varsigma,y(\varsigma)) \rVert^2   \nonumber \\
& \quad  + 4 Tr(Q)\int_0^T |a(\varsigma-r)|^2 \mathbb{E} \lVert \mathcal{N}_n(r,y(r)) \rVert^2_{L_2^0} dr \bigg\} d{\varsigma} \bigg]\nonumber  \\
& \leq 4 \bigg[ M^2C_{h_k^1}  + M^2 T^2 C_{h_k^2}+ \lVert \mathcal{A}^{\beta-1} \rVert^2 \rho^2 \nonumber \\
& \quad \times\bigg\{L_{\mathcal{K}}^{\prime} + 4Tr(Q) \lVert a^2 \rVert_{L^p(0,T)} \lVert {L_\mathcal{N}^{\prime}} \rVert_{L^q(0,T)} \bigg\} (s-\varsigma_k)\bigg] \nonumber \\
& \leq N_k.
\end{align}
After summarizing the above inequalities $(\ref{3.1})-(\ref{3.3})$, we get 
\begin{equation*}
\lVert (\phi_n y) \rVert_{PC,\beta} \leq R.
\end{equation*}
\vspace{2mm}
\\
\textbf{Step 2:} Now, the only thing left is to show that $\phi_n$ is a strict Banach contraction map on $B_R(PC_T^{\beta})$.\\
For any $y,x \in B_R(PC_T^{\beta})$ and $s \in (0,s_1]$,
\begin{align}\label{3.4}
\mathbb{E} \lVert (\phi_n y)(s) - (\phi_n x)(s) \rVert_{\beta}^2 &\leq 2\bigg[ \lVert \mathcal{A}^{\beta-1} \rVert^2 \int_0^s \lVert \mathcal{A}P_{\alpha}(s-\varsigma) \rVert^2 \bigg\{ \mathbb{E} \lVert \mathcal{K}_n(\varsigma,y(\varsigma))  \nonumber \\
& \quad - \mathcal{K}_n(\varsigma,x(\varsigma)) \rVert^2+ 4Tr(Q) \int_0^T |a(\varsigma-r)|^2 \nonumber\\
&\quad \times \mathbb{E} \lVert \mathcal{N}_n(r,y(r)) - \mathcal{N}_n(r,x(r)) \rVert^2_{L_2^0} dr \bigg\} d{\varsigma}\bigg] \nonumber\\
& \leq 2\bigg[ \lVert \mathcal{A}^{\beta-1} \rVert^2 \rho^2 s \bigg\{L_\mathcal{K}+4Tr(Q)\lVert a^2 \rVert_{L^p(0,T)} \lVert L_\mathcal{N} \rVert_{L^q(0,T)} \bigg\}  \nonumber \\
& \quad \times \mathbb{E} \lVert y(s)-x(s) \rVert^2_{\beta}\bigg] \nonumber\\
& \leq Q_0 \lVert y-x \rVert_{PC,\beta}^2.
\end{align}
Similarly, for any $x,y \in B_R(PC_T^{\beta})$ and $s \in (s_k,\varsigma_k], \ k = 1, 2, \cdots,q$, we have
\begin{align}\label{3.5}
\mathbb{E} \lVert (\phi_n y)(s) - (\phi_n x)(s) \rVert_{\beta}^2 &= \mathbb{E} \lVert h_{k,n}^1(s,y(s_k^-)) - h_{k,n}^1(s,x(s_k^-)) \rVert^2_{\beta} \nonumber\\
& \leq D_{h_k^1} \lVert y -x \rVert_{PC,\beta}^2.
\end{align}
Now, for any $x,y \in B_R(PC_T^{\beta})$ and $s \in (\varsigma_k,s_{k+1}], \ k = 1, 2, \cdots,q$, we have
\begin{align}
\mathbb{E} \lVert (\phi_n y)(s) - (\phi_n x)(s) \rVert_{\beta}^2 & \leq 4[I_1+I_2+I_3+I_4].
\end{align}
Where,
\begin{align}\label{3.7}
I_1 &= \mathbb{E}\lVert C_{\alpha}(s-\varsigma_k)\big[h_{k,n}^1(\varsigma_k,y(s_k^-)) - h_{k,n}^1(\varsigma_{k},x(s_k^-))\big] \rVert^2_{\beta} \nonumber \\
& \leq \lVert C_{\alpha}(s-\varsigma_k) \rVert^2 \mathbb{E}\lVert h_{k,n}^1(\varsigma_{k},y(s_k^-)) - h_{k,n}^1(\varsigma_k,x(s_k^-)) \rVert^2_{\beta} \nonumber \\
& \leq M^2 D_{h_k^1}\mathbb{E} \lVert y(s_k^-)-x(s_k^-) \rVert_{\beta}^2.\\
I_2 &= \mathbb{E}\lVert S_{\alpha}(s-\varsigma_k) \big[h_{k,n}^2(\varsigma_k,y(s_k^-)) - h_{k,n}^2(\varsigma_{k},x(s_k^-)) \big]\rVert^2_{\beta} \nonumber \\
& \leq \lVert S_{\alpha}(s-\varsigma_k) \rVert^2 \mathbb{E}\lVert h_{k,n}^2(\varsigma_k,y(s_k^-)) - h_{k,n}^2(\varsigma_k,x(s_k^-)) \rVert^2_{\beta} \nonumber \\
& \leq M^2 T^2 D_{h_k^2}\mathbb{E} \lVert y(s_k^-)-x(s_k^-) \rVert_{\beta}^2.
\end{align}
\begin{align*}
I_3 &=  \mathbb{E}\lVert \int_{\varsigma_k}^s  P_{\alpha}(s-\varsigma) [\mathcal{K}_n(\varsigma,y(\varsigma)) - \mathcal{K}_n(\varsigma,x(\varsigma))] \rVert_{\beta}^2.
\end{align*}
By the assumption $\textbf{(A2)}$ and the Hölder's inequality, we get
\begin{align}
I_3 &\leq \lVert \mathcal{A}^{\beta-1} \rVert^2 \int_{\varsigma_k}^s \lVert \mathcal{A}P_{\alpha}(s-\varsigma) \rVert^2 \mathbb{E} \lVert \mathcal{K}_n(\varsigma,y(\varsigma)) - \mathcal{K}_n(\varsigma,x(\varsigma)) \rVert^2 d{\varsigma} \nonumber \\
& \leq \lVert \mathcal{A}^{\beta-1} \rVert^2 \rho^2 (s-\varsigma_k){L_\mathcal{K}} \mathbb{E} \lVert y(\varsigma)-x(\varsigma) \rVert^2_{\beta}.
\end{align}
\begin{align}
I_4 = \mathbb{E}\lVert \int_{\varsigma_k}^s P_{\alpha}(s-\varsigma) \int_0^\varsigma a(\varsigma-r)[\mathcal{N}_n(r,y(r))-\mathcal{N}_n(r,x(r))]dW(r) d{\varsigma} \rVert^2_{\beta}. \nonumber
\end{align}
By the Lemma (\ref{lemma}), the assumption $\textbf{(A3)}$ and Hölder's inequality, we get
\begin{align}\label{3.10}
I_4 &\leq \lVert \mathcal{A}^{\beta-1} \rVert^2 \int_{\varsigma_k}^s \lVert \mathcal{A}P_{\alpha}(s-\varsigma) \rVert^2 \mathbb{E}\ \bigg\{\lVert \int_0^\varsigma a(\varsigma-r)[\mathcal{N}_n(r,y(r))-\mathcal{N}_n(r,x(r))] \nonumber \\
&\quad \times dW(r) \rVert^2\bigg\}d{\varsigma} \nonumber \\
& \leq 4Tr(Q) \lVert \mathcal{A}^{\beta-1} \rVert^2 \rho^2 (s-\varsigma_k)\int_0^T |a(\varsigma-r)|^2 \mathbb{E}\lVert \mathcal{N}_n(r,y(r))-\mathcal{N}_n(r,x(r))] \rVert^2_{L_2^0} dr\nonumber \ \\
&\leq 4Tr(Q) \lVert \mathcal{A}^{\beta-1} \rVert^2 \rho^2 (s-{\varsigma}_k) \lVert a^2 \rVert_{L^p(0,T)} \lVert L_\mathcal{N} \rVert_{L^q(0,T)}\mathbb{E} \lVert y(r)-x(r) \rVert^2_{\beta}.
\end{align}
Thus by (\ref{3.7})-(\ref{3.10}), we get
\begin{equation}\label{3.11}
\mathbb{E} \lVert (\phi_n y)(s) - (\phi_n x)(s) \rVert^2_{\beta} \leq  Q_k \lVert y-x \rVert^2_{PC,\beta}.
\end{equation}
Therefore, by the inequalities (\ref{3.4}),(\ref{3.5}) and (\ref{3.11}), we get
\begin{equation}
\lVert (\phi_n y) - (\phi_n x) \rVert^2_{PC,\beta} \leq D \lVert y-x \rVert^2_{PC,\beta}.
\end{equation}
Hence, $\phi_n$ is a contraction map on $B_R(PC_T^{\beta})$. Therefore, by Banach contraction principal, there exists a unique $y_n \in B_R(PC_T^{\beta})$ satisfying
\begin{equation}\label{3.13}y_n(s)= 
\begin{cases}
C_{\alpha}(s)y_0 + S_{\alpha}(s)z_0 + \int_0^s P_{\alpha}(s-\varsigma)[\mathcal{K}_n(\varsigma,y_n(\varsigma))\\
\qquad\qquad+\int_0^{\varsigma} a(\varsigma-r)\mathcal{N}_n(r,y_n(r)) dW(r)]d{\varsigma},\ s \in (0,s_1];\\
 h_{k,n}^1(s,y_n(s_k^-)), \ s \in (s_k,\varsigma_k],  k = 1,2,\cdots,q;\\
C_{\alpha}(s-\varsigma_k)h_{k,n}^1(\varsigma_k,y_n(s_k^-))+S_{\alpha}(s-\varsigma_k)h_{k,n}^2(\varsigma_k,y_n(s_k^-))\\
\quad + \int_{\varsigma_k}^s P_{\alpha}(s-\varsigma)[\mathcal{K}_n(\varsigma,y_n(\varsigma))+ \int_0^{\varsigma} a(\varsigma-r)\mathcal{N}_n(r,y_n(r)) dW(r)]d{\varsigma}, \\ \qquad\qquad\qquad\qquad\qquad s \in (\varsigma_k,s_{k+1}],\ k = 1,2,\cdots,q.
\end{cases}
\end{equation}

\end{proof}
\end{theorem}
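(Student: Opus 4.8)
The plan is to apply the Banach contraction principle to $\phi_n$ on the closed ball $B_R(PC_T^\beta)$, which is a complete metric space under the metric induced by $\|\cdot\|_{PC,\beta}$. Before invoking the fixed point theorem I would check that $\phi_n$ is well defined: for $y\in B_R(PC_T^\beta)$ the process $\phi_n y$ must be $\mathcal{F}_s$-adapted, must belong to $PC_T^\beta$ (continuity off the points $s_k$, left continuity on $[0,T]$, and existence of the right limits $\lim_{s\downarrow s_k}$, all following from the strong continuity of $C_\alpha$, $S_\alpha$, $P_\alpha$ on $\mathcal{H}_\beta$ and the continuity of the $h_k^i$), and must be square-integrable in $\mathcal{H}_\beta$; here one uses $y_0,z_0\in D(\mathcal{A})\subset D(\mathcal{A}^\beta)$ so that $C_\alpha(s)y_0$ and $S_\alpha(s)z_0$ lie in $\mathcal{H}_\beta$, and records that the spectral projection $\mathcal{P}^n$ commutes with $\mathcal{A}^\beta$ and is a contraction for $\|\cdot\|_\beta$, so that $\mathcal{K}_n,\mathcal{N}_n,h_{k,n}^i$ inherit on $B_R(\mathcal{H}_\beta)$ the growth and Lipschitz constants of $\mathcal{K},\mathcal{N},h_k^i$.

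For the first step (self-mapping) I estimate $\mathbb{E}\|(\phi_n y)(s)\|_\beta^2$ separately on the three kinds of subintervals $(0,s_1]$, $(s_k,\varsigma_k]$ and $(\varsigma_k,s_{k+1}]$. On each one I split the right-hand side using the elementary inequality $(\sum_{j=1}^m c_j)^2\le m\sum_{j=1}^m c_j^2$, bound $\|C_\alpha(\cdot)\|\le M$ and $\|S_\alpha(\cdot)\|\le MT$ via \textbf{(A1)}, write $\mathcal{A}^\beta P_\alpha(s-\varsigma)=\mathcal{A}^{\beta-1}\,\mathcal{A}P_\alpha(s-\varsigma)$ and use the boundedness of $\mathcal{A}^{\beta-1}$ together with $\rho=\sup_s\|\mathcal{A}P_\alpha(s)\|$, control the deterministic forcing term by the growth condition in \textbf{(A2)} and the impulse terms by \textbf{(A4)}, and handle the stochastic convolution with Lemma~\ref{lemma} followed by H\"older's inequality in the conjugate exponents $p,q$ to pull out $\|a^2\|_{L^p(0,T)}\|L_\mathcal{N}'\|_{L^q(0,T)}$. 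These computations produce exactly the constants $N_0$, $C_{h_k^1}$ and $N_k$; since $R^2$ is defined to be their maximum, it follows that $\|\phi_n y\|_{PC,\beta}\le R$, i.e. $\phi_n$ maps $B_R(PC_T^\beta)$ into itself.

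For the second step (contraction) I carry out the analogous estimates on $\mathbb{E}\|(\phi_n y)(s)-(\phi_n x)(s)\|_\beta^2$ for $y,x\in B_R(PC_T^\beta)$, this time invoking the Lipschitz bounds in \textbf{(A2)}--\textbf{(A4)}. On $(0,s_1]$ the leading terms $C_\alpha(s)y_0$ and $S_\alpha(s)z_0$ cancel, so only a factor $2$ (not $4$) enters and the resulting constant is $Q_0$; on $(s_k,\varsigma_k]$ only the impulse contributes, giving $D_{h_k^1}$; and on $(\varsigma_k,s_{k+1}]$ I split the difference into the four pieces $I_1,\dots,I_4$ — cosine-impulse, sine-impulse, deterministic convolution, stochastic convolution — bound each by a constant multiple of $\|y-x\|_{PC,\beta}^2$ and sum to obtain $Q_k$. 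Taking the supremum over $s\in[0,T]$ gives $\|\phi_n y-\phi_n x\|_{PC,\beta}^2\le D\,\|y-x\|_{PC,\beta}^2$ with $D=\max_k\{\max_k Q_k,\,D_{h_k^1}\}$, so the hypothesis $D<1$ makes $\phi_n$ a strict contraction. The Banach fixed point theorem then yields a unique $y_n\in B_R(PC_T^\beta)$ with $\phi_n y_n=y_n$, and by construction this $y_n$ satisfies the approximate mild-solution integral equation on each of the three types of intervals.

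The one genuinely delicate point is the estimate of the stochastic double-convolution term $\int_{\varsigma_k}^{s}P_\alpha(s-\varsigma)\int_0^{\varsigma}a(\varsigma-r)\mathcal{N}_n(r,y(r))\,dW(r)\,d\varsigma$ (and its counterpart on $(0,s_1]$): one first applies $\mathcal{A}^\beta$ with the $\mathcal{A}^{\beta-1}$--$\mathcal{A}P_\alpha$ factorization, then moves the expectation inside the outer $d\varsigma$-integral via H\"older/Jensen, applies Lemma~\ref{lemma} to the inner It\^o integral, and finally uses H\"older in $r$ with the conjugate pair $(p,q)$ from \textbf{(A3)} to separate $\|a^2\|_{L^p}$ from $\|L_\mathcal{N}\|_{L^q}$ (respectively $\|L_\mathcal{N}'\|_{L^q}$). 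Keeping all the multiplicative constants aligned so that they assemble into the stated $N_k$ and $Q_k$ is the bookkeeping that requires care; everything else is routine use of the triangle inequality and direct substitution of the hypotheses.
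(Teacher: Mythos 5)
Your proposal is correct and follows essentially the same route as the paper: a two-step Banach fixed point argument on $B_R(PC_T^{\beta})$, with the self-mapping bounds producing $N_0$, $C_{h_k^1}$, $N_k$ and the contraction bounds producing $Q_0$, $D_{h_k^1}$, $Q_k$ via the same $I_1,\dots,I_4$ decomposition and the same use of Lemma~\ref{lemma} and H\"older's inequality with the conjugate pair $(p,q)$. Your additional remarks on adaptedness, piecewise continuity, and the fact that $\mathcal{P}^n$ is an $\|\cdot\|_{\beta}$-contraction commuting with $\mathcal{A}^{\beta}$ (so that $\mathcal{K}_n,\mathcal{N}_n,h_{k,n}^i$ inherit the constants of \textbf{(A2)}--\textbf{(A4)}) are details the paper leaves implicit, and they strengthen rather than alter the argument.
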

\begin{Corollary} Suppose that $\textbf{(A1)}-\textbf{(A4)}$ are fulfilled, $z_0,y_0 \in D(\mathcal{A})$ and $h_{k,n}^i(s,y_{n}(s_k^-)) \in D(\mathcal{A}) \ \forall$ $s \in (s_k,\varsigma_k], \ k = 1,2,\cdots,q, \ i = 1,2$ then $y_n(s) \in D(\mathcal{A}^{\eta}) \ \forall s \in [0,T]$, where $0 \leq \eta \leq 1.$
\end{Corollary}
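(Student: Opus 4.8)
The plan is to use the explicit representation (\ref{3.13}) of the fixed point $y_n$ established in Theorem (\ref{thrm3.1}) and to verify, on each of the three types of subintervals and term by term, that every summand of $y_n(s)$ already lies in $D(\mathcal{A})$. Since $\mathcal{A}$ is self-adjoint and positive definite by \textbf{(A1)}, each fractional power $\mathcal{A}^{\eta}$, $0\le\eta\le1$, is a closed operator and $D(\mathcal{A})\subset D(\mathcal{A}^{\eta})$ (for instance via the moment inequality $\lVert\mathcal{A}^{\eta}x\rVert\le\lVert x\rVert^{1-\eta}\lVert\mathcal{A}x\rVert^{\eta}$ or the spectral theorem), so membership in $D(\mathcal{A})$ is enough for the conclusion. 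On the impulsive interval $s\in(s_k,\varsigma_k]$ there is nothing to prove: $y_n(s)=h_{k,n}^1(s,y_n(s_k^-))\in D(\mathcal{A})$ directly by hypothesis.

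For $s\in(0,s_1]$ and $s\in(\varsigma_k,s_{k+1}]$ I would treat the leading terms through the mapping properties of the solution operator. Since $y_0,z_0\in D(\mathcal{A})$ and, by assumption, $h_{k,n}^i(\varsigma_k,y_n(s_k^-))\in D(\mathcal{A})$, property (ii) of the fractional cosine family gives $C_{\alpha}(s)y_0\in D(\mathcal{A})$ and $C_{\alpha}(s-\varsigma_k)h_{k,n}^1(\varsigma_k,y_n(s_k^-))\in D(\mathcal{A})$; and because $S_{\alpha}(t)=\int_0^tC_{\alpha}(\tau)\,d\tau$ with $\tau\mapsto\mathcal{A}C_{\alpha}(\tau)x=C_{\alpha}(\tau)\mathcal{A}x$ continuous for $x\in D(\mathcal{A})$, closedness of $\mathcal{A}$ yields $S_{\alpha}(t)x\in D(\mathcal{A})$, so $S_{\alpha}(s)z_0$ and $S_{\alpha}(s-\varsigma_k)h_{k,n}^2(\varsigma_k,y_n(s_k^-))$ lie in $D(\mathcal{A})$ as well. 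For the convolution term $\int_{\varsigma_k}^sP_{\alpha}(s-\varsigma)\big[\mathcal{K}_n(\varsigma,y_n(\varsigma))+\int_0^{\varsigma}a(\varsigma-r)\mathcal{N}_n(r,y_n(r))\,dW(r)\big]d\varsigma$ (and its analogue on $(0,s_1]$), the key point is that, $-\mathcal{A}$ generating an analytic semigroup with $\rho=\sup_{0\le s\le T}\lVert\mathcal{A}P_{\alpha}(s)\rVert<\infty$, one has $\mathcal{A}^{\eta}P_{\alpha}(s)=\mathcal{A}^{\eta-1}\big(\mathcal{A}P_{\alpha}(s)\big)$ bounded (indeed $\lVert\mathcal{A}^{\eta-1}\rVert\le\lambda_0^{\eta-1}$ for $\eta\le1$), so $P_{\alpha}(t)$ sends all of $\mathcal{H}$ into $D(\mathcal{A})\subset D(\mathcal{A}^{\eta})$. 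Hence the integrand is $D(\mathcal{A}^{\eta})$-valued, and, estimating exactly as in (\ref{3.1})--(\ref{3.3}) with $y_n\in B_R(PC_T^{\beta})$, \textbf{(A2)}, \textbf{(A3)}, Lemma (\ref{lemma}) and Hölder's inequality, $\mathbb{E}\int_{\varsigma_k}^s\lVert\mathcal{A}^{\eta}P_{\alpha}(s-\varsigma)\rVert\,\lVert\mathcal{K}_n(\varsigma,y_n(\varsigma))+\int_0^{\varsigma}a(\varsigma-r)\mathcal{N}_n(r,y_n(r))\,dW(r)\rVert\,d\varsigma<\infty$, so the transformed integrand is almost surely Bochner integrable; closedness of $\mathcal{A}^{\eta}$ then permits moving $\mathcal{A}^{\eta}$ inside the $d\varsigma$-integral. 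Adding the three contributions on the relevant interval gives $y_n(s)\in D(\mathcal{A}^{\eta})$ for every $s\in[0,T]$, in the almost sure sense.

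I expect the only genuine obstacle to be this last commutation: making rigorous that $\mathcal{A}^{\eta}$ passes through the convolution integral (and, should one prefer to interchange one layer earlier, through the inner Itô integral — legitimate since a closed operator commutes with the stochastic integral once both sides are well-defined). The uniform bound on $\lVert\mathcal{A}^{\eta}P_{\alpha}(s)\rVert$ together with the a priori estimates already obtained in Theorem (\ref{thrm3.1}) supply exactly the integrability these arguments require; everything else reduces to the hypotheses $y_0,z_0\in D(\mathcal{A})$, $h_{k,n}^i(\varsigma_k,y_n(s_k^-))\in D(\mathcal{A})$, and the elementary mapping properties of $C_{\alpha}$ and $S_{\alpha}$.
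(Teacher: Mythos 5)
Your proof is correct and follows the same overall skeleton as the paper's: trivial on the impulsive intervals, domain-invariance of $C_{\alpha}$ and $S_{\alpha}$ for the leading terms, and $D(\mathcal{A})\subseteq D(\mathcal{A}^{\eta})$ to finish. The one place you genuinely diverge is the convolution term: the paper simply cites Proposition 3.3 of Chen and Li (reference [5]) to assert $\int_0^s P_{\alpha}(s-\varsigma)\mathcal{K}_n(\varsigma,y_n(\varsigma))\,d\varsigma\in D(\mathcal{A})$, whereas you prove the needed fact from scratch, using that $\mathcal{A}P_{\alpha}(t)$ is bounded uniformly on $[0,T]$ (the constant $\rho$ the paper already uses implicitly), so $P_{\alpha}(t)$ smooths $\mathcal{H}$ into $D(\mathcal{A})$, and then invoking Hille's theorem (closed operators pass through Bochner integrals once the transformed integrand is integrable) together with the a priori estimates of Theorem \ref{thrm3.1}. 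Your route is more self-contained and, notably, is the only one of the two that actually addresses the summand containing the It\^{o} integral --- the paper's proof silently treats only the $\mathcal{K}_n$ part of the integrand. The cost is a little more bookkeeping about measurability and integrability; the benefit is that no external resolvent-family result is needed and the stochastic term is covered honestly. Both arguments are sound.
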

\begin{proof}
This result is true for $s\in(s_k,\varsigma_k]$. Now, if $y_0,z_0 \in D(\mathcal{A})$ then $C_{\alpha}(s)y_0, \ S_{\alpha}(s)z_0 \in D(\mathcal{A})$ therefore their sum is also in $D(\mathcal{A})$. Similarly, if $h_{k,n}^i(s,y_n(s_k^-)) \in D(\mathcal{A})$ for all \ $s \in (s_k,\varsigma_k], k = 1,2,\cdots,q$ and i = 1,2 then  $C_{\alpha}(s-\varsigma_k)h_{k,n}^1(\varsigma_k,y_n(s_k^-)), \ S_{\alpha}(s-\varsigma_k)h_{k,n}^2(\varsigma_k,y_n(s_k^-)) \in D(\mathcal{A})$ therefore their sum is also in $D(\mathcal{A})$. From proposition (3.3) in \cite{5}, $\int_0^s P_{\alpha}(s-\varsigma)\mathcal{K}_n(\varsigma,y_n(\varsigma)) d{\varsigma} \in D(\mathcal{A})$ for all $\mathcal{K}_n(\varsigma,y_n(\varsigma)) \in \mathcal{H}$. Hence, the the requisite result follows from the aforementioned facts as well as the result that $D(\mathcal{A}) \subseteq D(\mathcal{A}^{\eta})$ for $0 \leq \eta \leq 1$.
\end{proof} 
\begin{lemma}
Let $\textbf{(A1)}-\textbf{(A4)}$ hold and $y_0,z_0 \in D(\mathcal{A})$ then there exists a constant $M^{\prime}$, independent of n, such that
\begin{equation*}
\mathbb{E} \lVert \mathcal{A}^{\eta}y_n(s) \rVert^2 \ \leq \ M^{\prime}, \quad \forall \ s \in [0,T] \quad  \text{and} \  0 < \eta < 1 
\end{equation*} 
\text{where}
\begin{equation*}
M^{\prime} = \max\limits_{1 \leq k \leq q}\big\{\max\limits_{0 \leq k \leq q} M_k, C_{h_k^1} \big\}.
\end{equation*}
\end{lemma}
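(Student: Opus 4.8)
This is the a priori estimate that upgrades Step~1 of Theorem~\ref{thrm3.1} from the $\mathcal{H}_{\beta}$-norm to the graph norm of $\mathcal{A}^{\eta}$ (here $0<\eta<1$, and tacitly $\eta\le\beta$, so that the impulse values---which a priori lie only in $\mathcal{H}_{\beta}$---belong to $D(\mathcal{A}^{\eta})$). The plan is to apply $\mathcal{A}^{\eta}$ to the fixed-point identity (\ref{3.13}) and to repeat the three case estimates of Step~1 with $\mathcal{A}^{\beta}$ replaced throughout by $\mathcal{A}^{\eta}$; the constants $M_k$ are then defined exactly as the $N_k$, but with $\lVert\mathcal{A}^{\beta-1}\rVert$ replaced by $\lVert\mathcal{A}^{\eta-1}\rVert$ and $\lVert y_0\rVert_{\beta},\lVert z_0\rVert_{\beta}$ by $\lVert\mathcal{A}^{\eta}y_0\rVert,\lVert\mathcal{A}^{\eta}z_0\rVert$, and one sets $M^{\prime}=\max_{1\le k\le q}\{\max_{0\le k\le q}M_k,\,C_{h_k^1}\}$.

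For $s\in(0,s_1]$, apply $\mathcal{A}^{\eta}$ to the first branch of (\ref{3.13}). Because $y_0,z_0\in D(\mathcal{A})\subset D(\mathcal{A}^{\eta})$ and $\mathcal{A}^{\eta}$ commutes with $C_{\alpha}$ and $S_{\alpha}$, the first two terms are bounded by $M^2\lVert\mathcal{A}^{\eta}y_0\rVert^2$ and $M^2T^2\lVert\mathcal{A}^{\eta}z_0\rVert^2$ via \textbf{(A1)}. For the convolution term write $\mathcal{A}^{\eta}P_{\alpha}(s-\varsigma)=\mathcal{A}^{\eta-1}\bigl(\mathcal{A}P_{\alpha}(s-\varsigma)\bigr)$, use that $\mathcal{A}^{\eta-1}$ is a bounded operator (since $\eta<1$) and $\lVert\mathcal{A}P_{\alpha}(s-\varsigma)\rVert\le\rho$, and then run the same chain of estimates as in (\ref{3.1})---\textbf{(A2)} for the drift, Lemma~\ref{lemma} together with \textbf{(A3)} and Hölder's inequality for the stochastic convolution. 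The point to verify is that the growth bounds $L_{\mathcal{K}}^{\prime},L_{\mathcal{N}}^{\prime}$ apply to $\mathcal{K}_n(\varsigma,y_n(\varsigma))=\mathcal{K}(\varsigma,\mathcal{P}^n y_n(\varsigma))$ and to $\mathcal{N}_n$: since $\mathcal{P}^n$ is an orthogonal projection commuting with $\mathcal{A}^{\beta}$ we have $\lVert\mathcal{P}^n y_n(\varsigma)\rVert_{\beta}\le\lVert y_n(\varsigma)\rVert_{\beta}\le R$, so $\mathcal{P}^n y_n(\varsigma)\in B_R(\mathcal{H}_{\beta})$ for every $n$. This gives $\mathbb{E}\lVert\mathcal{A}^{\eta}y_n(s)\rVert^2\le M_0$.

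For $s\in(s_k,\varsigma_k]$ one has $y_n(s)=h_k^1(s,\mathcal{P}^n y_n(s_k^-))$, and since $\mathcal{A}^{\eta-\beta}$ is bounded, \textbf{(A4)} yields $\mathbb{E}\lVert\mathcal{A}^{\eta}y_n(s)\rVert^2\le\lVert\mathcal{A}^{\eta-\beta}\rVert^2 C_{h_k^1}$, a fixed multiple of $C_{h_k^1}$ (equal to $C_{h_k^1}$ when $\eta=\beta$, and in general absorbed into the constant). For $s\in(\varsigma_k,s_{k+1}]$ one adds the two impulse contributions $C_{\alpha}(s-\varsigma_k)h_{k,n}^1(\varsigma_k,y_n(s_k^-))$ and $S_{\alpha}(s-\varsigma_k)h_{k,n}^2(\varsigma_k,y_n(s_k^-))$, estimated by \textbf{(A1)}, \textbf{(A4)} and the same domain remark as above, and handles the convolution exactly as on $(0,s_1]$, obtaining $\mathbb{E}\lVert\mathcal{A}^{\eta}y_n(s)\rVert^2\le M_k$. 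Taking the supremum over $s\in[0,T]$ and the maximum over $k=0,1,\dots,q$ gives the bound $M^{\prime}$, which is independent of $n$ because $M$, $\rho$, $\lVert\mathcal{A}^{\eta-1}\rVert$, $\lVert\mathcal{A}^{\eta-\beta}\rVert$, $Tr(Q)$, $\lVert a^2\rVert_{L^p(0,T)}$, the $L^q$-norms of $L_{\mathcal{N}}^{\prime}$ and all the constants from \textbf{(A2)}--\textbf{(A4)} are.

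The routine part is the Hölder/Minkowski/stochastic-Fubini bookkeeping, which is word for word that of Step~1 of Theorem~\ref{thrm3.1}. The two points I would take care over---and where the real content lies---are: (i) justifying that $\mathcal{A}^{\eta}$ commutes with $C_{\alpha}(s),S_{\alpha}(s)$ on $D(\mathcal{A})$ and may be pulled inside the Bochner and stochastic integrals (closedness of $\mathcal{A}^{\eta}$, as already invoked in the Corollary preceding this lemma); and (ii) keeping every constant free of $n$, for which the uniform estimate $\lVert\mathcal{P}^n\rVert_{L(\mathcal{H})}=1$ and the commutation $\mathcal{P}^n\mathcal{A}^{\beta}=\mathcal{A}^{\beta}\mathcal{P}^n$ are precisely the inputs needed, just as in Theorem~\ref{thrm3.1}.
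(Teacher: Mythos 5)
Your proposal is correct and follows the paper's proof essentially verbatim: apply $\mathcal{A}^{\eta}$ to each branch of (\ref{3.13}), factor $\mathcal{A}^{\eta}P_{\alpha}(s-\varsigma)=\mathcal{A}^{\eta-1}\mathcal{A}P_{\alpha}(s-\varsigma)$, and rerun the Step~1 estimates of Theorem~\ref{thrm3.1} using \textbf{(A1)}--\textbf{(A4)}, H\"older's inequality and Lemma~\ref{lemma} to obtain the constants $M_k$ and hence $M^{\prime}$. The only divergence is on the impulse intervals, where the paper simply writes $\mathbb{E}\lVert h^1_{k,n}(s,y_n(s_k^-))\rVert^2_{\eta}\le C_{h_k^1}$ whereas you insert the factor $\lVert\mathcal{A}^{\eta-\beta}\rVert^2$ under the tacit assumption $\eta\le\beta$ --- a restriction the paper cannot actually afford, since the lemma is later invoked in Theorem~\ref{thrm4.1} with $0<\beta<\eta<1$, so your more careful bookkeeping in fact exposes a point the paper glosses over.
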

\begin{proof}
For any $s \in (0,s_1]$, applying $\mathcal{A}^{\eta}$ on both sides of (\ref{3.13}) and taking norm
\begin{align}
\mathbb{E}\lVert y_n(s) \rVert_{\eta}^2 & \leq 4\bigg[\lVert C_{\alpha}(s) \rVert^2 \lVert y_0 \rVert^2_{\eta} + \lVert S_{\alpha}(s)\rVert^2 \lVert z_0 \rVert^2_{\eta}+ \lVert \mathcal{A}^{\eta-1} \rVert^2 \int_0^s \lVert \mathcal{A}P_{\alpha}(s-\varsigma) \rVert^2  \nonumber\\
& \quad \times\mathbb{E}\lVert \mathcal{K}_n(\varsigma,y_n(\varsigma)) \rVert^2 d{\varsigma}+ 4Tr(Q)\lVert \mathcal{A}^{\eta-1} \rVert^2 \int_0^s \lVert \mathcal{A}P_{\alpha}(s-\varsigma) \rVert^2 \nonumber \\
& \quad \times \bigg(\int_0^T |a(\varsigma-r)|^2 \mathbb{E} \lVert \mathcal{N}_n(r,y_n(r)) \rVert^2_{L_2^0} dr \bigg) d\varsigma \bigg] \nonumber\\
& \leq 4\bigg[M^2 \lVert y_0 \rVert^2_{\eta}+M^2 T^2 \lVert z_0 \rVert^2_{\eta}+\lVert \mathcal{A}^{\eta-1} \rVert^2 \rho^2 s_1 \nonumber \\
& \quad \times \bigg\{{L^{\prime}_\mathcal{K}}+4 Tr(Q) \lVert a^2 \rVert_{L^p(0,T)} \lVert {L^{\prime}_\mathcal{N}} \rVert_{L^q(0,T)}\bigg\}\bigg]\nonumber\\
&= M_0 \leq \ M^{\prime}.
\end{align}
Now, for any $ s \in (s_k,\varsigma_k], \ k = 1,2,\cdots,q,$ after applying $\mathcal{A}^{\eta}$ on both sides of (\ref{3.13}), we have
\begin{align}
\mathbb{E}\lVert y_n(s) \rVert^2_{\eta} = \mathbb{E}\lVert h^1_{k,n}(s,y_n(s_k^-)) \rVert^2_{\eta} \ \leq \ C_{h_k^1}.
\end{align}
Similarly, for any $s \in (\varsigma_k,s_{k+1}], \ k = 1,2,\cdots,q$, after applying $\mathcal{A}^{\eta}$ on both sides of (\ref{3.13}), we have
\begin{align}
\mathbb{E}\lVert y_n(s) \rVert^2_{\eta} & \leq \ 4\bigg[M^2 C_{h_k^1} + M^2T^2C_{h_k^2}+  \lVert \mathcal{A}^{\eta-1} \rVert^2 \int_{\varsigma_k}^s \lVert \mathcal{A}P_{\alpha}(s-\varsigma) \rVert^2 \mathbb{E}\lVert \mathcal{K}_n(\varsigma,y_n(\varsigma)) \rVert^2 d{\varsigma} \nonumber \\
& \quad + 4Tr(Q)\lVert \mathcal{A}^{\eta-1} \rVert^2 \int_{\varsigma_k}^s \lVert \mathcal{A}P_{\alpha}(s-\varsigma) \rVert^2 \bigg(\int_0^T |a(\varsigma-r)|^2 \mathbb{E} \lVert \mathcal{N}_n(r,y_n(r)) \rVert^2_{L_2^0} \nonumber\\
& \quad \times  dr \bigg) d{\varsigma}\bigg] \nonumber \\
&\leq 4 \bigg[M^2  C_{h_k^1} +M^2 T^2  C_{h_k^2} +\lVert \mathcal{A}^{\eta-1} \rVert^2 \rho^2 s_{k+1} \nonumber \\
& \quad \times\bigg\{{L^{\prime}_{\mathcal{K}}} +4 Tr(Q) \lVert a^2 \rVert_{L^p(0,T)} \lVert {L^{\prime}_\mathcal{N}} \rVert_{L^q(0,T)}\bigg\}\bigg] \nonumber \\
&= M_k \leq \ M^{\prime}.
\end{align}
\end{proof}
\section{Convergence of approximate solutions}\label{section 4}
The convergence of the approximate solutions $y_n \in B_R(PC_T^{\beta})$ to a unique mild solution $y$, is shown in this section.
\begin{theorem}
\label{thrm4.1}
Let $\textbf{(A1)-(A4)}$ satisfied. If $y_0, z_0 \in D(\mathcal{A})$ then\\
$\lim\limits_{m \rightarrow \infty} \sup\limits_{\{n \geq m, \ 0 \leq s \leq T\}} \mathbb{E}\lVert y_n(s) - y_m(s) \rVert^2_{\beta} = 0$.\\
Therefore, $\{y_n\}$ form a Cauchy sequence in $B_R(PC_T^{\beta})$ which converges to $y$.
\end{theorem}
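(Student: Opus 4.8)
The plan is to bound, for every pair $n\ge m$, the quantity $\sup_{0\le s\le T}\mathbb{E}\lVert y_n(s)-y_m(s)\rVert_\beta^2$ by a number $\varepsilon_m$ which is independent of $n$ and satisfies $\varepsilon_m\to0$ as $m\to\infty$; since $B_R(PC_T^\beta)$ is a closed subset of the Banach space $PC_T^\beta$, this at once gives that $\{y_n\}$ is Cauchy and hence converges to some limit $y\in B_R(PC_T^\beta)$. Fix $\eta$ with $\beta<\eta<1$. The key device is the spectral-gap estimate: since $\mathcal{P}^n,\mathcal{P}^m$ are the orthogonal spectral projections of $\mathcal{A}$, for any $z\in D(\mathcal{A}^\eta)$ and $n\ge m$,
\[
\lVert(\mathcal{P}^n-\mathcal{P}^m)z\rVert_\beta^2=\sum_{j=m+1}^{n}\lambda_j^{2\beta}|\langle z,\psi_j\rangle|^2\le \lambda_{m+1}^{2(\beta-\eta)}\lVert\mathcal{A}^\eta z\rVert^2 ,
\]
and $\lambda_{m+1}^{2(\beta-\eta)}\to0$ because $\beta-\eta<0$ and $\lambda_{m+1}\to\infty$. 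Combined with the uniform a priori estimate $\mathbb{E}\lVert\mathcal{A}^\eta y_\ell(s)\rVert^2\le M'$ valid for all $\ell$ (the last lemma of Section 3), this controls every term in which $\mathcal{K}_n,\mathcal{N}_n,h^i_{k,n}$ differ from $\mathcal{K}_m,\mathcal{N}_m,h^i_{k,m}$ only through the replacement of $\mathcal{P}^n$ by $\mathcal{P}^m$.

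I would then estimate $\mathbb{E}\lVert y_n(s)-y_m(s)\rVert_\beta^2$ interval by interval along $0=\varsigma_0<s_1<\varsigma_1<\dots<s_{q+1}=T$. On $(0,s_1]$ the data $y_0,z_0$ are not projected, so subtracting the two representations in (\ref{3.13}) leaves only the $\mathcal{K}$- and $\mathcal{N}$-terms; writing, for $F\in\{\mathcal{K},\mathcal{N}\}$,
\[
F_n(\cdot,y_n(\cdot))-F_m(\cdot,y_m(\cdot))=\big[F_n(\cdot,y_n(\cdot))-F_n(\cdot,y_m(\cdot))\big]+\big[F(\cdot,\mathcal{P}^n y_m(\cdot))-F(\cdot,\mathcal{P}^m y_m(\cdot))\big],
\]
the first bracket is Lipschitz in $\lVert y_n-y_m\rVert_\beta$ with the same constants $L_{\mathcal{K}},L_{\mathcal{N}}$ as in Step 2 of Theorem \ref{thrm3.1} (the projections are $\beta$-norm contractions), while the second bracket is at most $L_{\mathcal{K}}$ (resp.\ $L_{\mathcal{N}}(\cdot)$) times $\lambda_{m+1}^{2(\beta-\eta)}M'$. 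Applying Lemma \ref{lemma}, Hölder's inequality and $\lVert\mathcal{A}P_\alpha(\cdot)\rVert\le\rho$ exactly as in Theorem \ref{thrm3.1} yields
\[
\sup_{0\le s\le s_1}\mathbb{E}\lVert y_n(s)-y_m(s)\rVert_\beta^2\le c_1\,\lambda_{m+1}^{2(\beta-\eta)}M'+c_1'\int_0^{s_1}\sup_{0\le r\le\varsigma}\mathbb{E}\lVert y_n(r)-y_m(r)\rVert_\beta^2\,d\varsigma ,
\]
whence Grönwall's inequality (or absorption using the contraction constant $<1$) gives a bound $\varepsilon_m^{(0)}=O(\lambda_{m+1}^{2(\beta-\eta)})\to0$. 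On an impulse interval $(s_k,\varsigma_k]$ one has $y_\ell(s)=h^1_{k,\ell}(s,y_\ell(s_k^-))$, and adding and subtracting $h^1_k(s,\mathcal{P}^n y_m(s_k^-))$ and invoking \textbf{(A4)} gives
\[
\mathbb{E}\lVert y_n(s)-y_m(s)\rVert_\beta^2\le 2D_{h^1_k}\,\mathbb{E}\lVert y_n(s_k^-)-y_m(s_k^-)\rVert_\beta^2+2D_{h^1_k}\,\lambda_{m+1}^{2(\beta-\eta)}M' ,
\]
where $s_k^-$ lies in the previously treated interval $(\varsigma_{k-1},s_k]$; on $(\varsigma_k,s_{k+1}]$ the same splitting applied to the $C_\alpha$- and $S_\alpha$-boundary terms (producing $M^2D_{h^1_k}$ and $M^2T^2D_{h^2_k}$ times the difference at $s_k^-$, plus $\lambda_{m+1}^{2(\beta-\eta)}$-terms) together with the $\mathcal{K},\mathcal{N}$ integral terms handled as on $(0,s_1]$ closes the recursion.

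Carrying these bounds forward along the $2q+1$ subintervals — the endpoint value $y_\ell(s_k^-)$ produced on $(\varsigma_{k-1},s_k]$ being the only quantity fed into the estimates on $(s_k,\varsigma_k]$ and $(\varsigma_k,s_{k+1}]$ — composes finitely many finite constants into one constant $C$, so that $\sup_{0\le s\le T}\mathbb{E}\lVert y_n(s)-y_m(s)\rVert_\beta^2\le C\,\lambda_{m+1}^{2(\beta-\eta)}M'=:\varepsilon_m\to0$ uniformly in $n\ge m$. I expect the bookkeeping of this recursion across the impulses to be the main nuisance — one must keep every accumulated constant finite and every projection-error contribution of order $\lambda_{m+1}^{2(\beta-\eta)}$ — whereas the only genuinely new analytic input beyond the estimates already established for Theorem \ref{thrm3.1} is the spectral-gap bound above, which is precisely why the uniform $D(\mathcal{A}^\eta)$-regularity of the $y_n$ is essential. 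Since $B_R(PC_T^\beta)$ is closed in the Banach space $PC_T^\beta$, the Cauchy sequence $\{y_n\}$ converges in it to a limit $y$, which completes the argument.
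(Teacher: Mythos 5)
Your proposal follows essentially the same route as the paper's proof: the same spectral-gap bound $\lVert(\mathcal{P}^n-\mathcal{P}^m)z\rVert_\beta^2\le\lambda_m^{-2(\eta-\beta)}\lVert\mathcal{A}^\eta z\rVert^2$ combined with the uniform bound $M'$ from the last lemma of Section 3, the same add-and-subtract splitting of $\mathcal{K}_n-\mathcal{K}_m$, $\mathcal{N}_n-\mathcal{N}_m$ and $h^i_{k,n}-h^i_{k,m}$ into a Lipschitz part and a projection-error part, and the same interval-by-interval absorption using the contraction constants $Q_0$, $D_{h_k^1}$, $Q_k$. The only cosmetic differences are that you track the recursion across the impulse points slightly more explicitly and mention Grönwall as an alternative to the absorption step the paper actually uses.
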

\begin{proof}
Let $n \geq m$ and $0 < \beta < \eta < 1$. Since $\mathcal{H}_n \supset \mathcal{H}_m$ and $\mathcal{H}_n^{\perp} \subset \mathcal{H}_m^{\perp}$, where  $\mathcal{H}_m^{\perp}$  and $\mathcal{H}_n^{\perp}$ be the orthogonal complement of $\mathcal{H}_m$ and $\mathcal{H}_n$ respectively, for all $n,m \in \mathbb{N} \cup \{0\}$. Then $\mathcal{H}$ can be written as $\mathcal{H} = \mathcal{H}_m \oplus \mathcal{H}_m^{\perp} = \mathcal{H}_n \oplus \mathcal{H}_n^{\perp}$. Any arbitrary $z \in \mathcal{H}$ has a unique representation $z = z_m + x_m$, where $z_m \in \mathcal{H}_m$ and $x_m \in \mathcal{H}_m^{\perp}$. Then $\mathcal{P}^{m}z = z_m \in \mathcal{H}_m$. Since $x_m \in \mathcal{H}_m^{\perp}$ therefore $x_m = \sum_{j = m+1}^n a_j \psi_{j} + x_m^{\prime},$ where $x_m^{\prime} \in \mathcal{H}_n^{\perp}$.\\
 Let $ \sum_{j = m+1}^n a_j \psi_j = z_m^{\prime}$.\\
Therefore
\begin{equation*}
\mathcal{P}^{n}z - \mathcal{P}^{m}z = z_m^{\prime} = \sum_{j = m+1}^n a_j \psi_j.
\end{equation*}
If $z = \sum_{j = 1}^{\infty} a_j \psi_j$ then $\lVert z \rVert^2 = \sum_{j = 1}^{\infty} |a_j|^2$.\\
Since $\mathcal{A}^{\beta - \eta} \psi_j = \lambda_j^{\beta-\eta} \psi_j$, thus we arrive at the following conclusion after some calculations
\begin{align}
\mathbb{E}\lVert \mathcal{A}^{\beta-\eta}(\mathcal{P}^n - \mathcal{P}^m)z \rVert^2
& \leq \frac{1}{\lambda_{m}^{2(\eta-\beta)}} \lVert z \rVert^2. \nonumber
\end{align}
Therefore
\begin{align}\label{4.1}
\mathbb{E} \lVert \mathcal{A}^{\beta - \eta}(\mathcal{P}^n - \mathcal{P}^m) \mathcal{A}^{\eta}y_m(\varsigma) \rVert^2 &\leq \frac{1}{\lambda_{m}^{2(\eta-\beta)}} \mathbb{E} \lVert \mathcal{A}^{\eta}y_m(\varsigma) \rVert^2 \nonumber \\
& \leq \frac{1}{\lambda_m^{2(\eta-\beta)}}M^{\prime}. 
\end{align}
Using $(\textbf{A2})$ and (\ref{4.1}), we get
\begin{align}\label{4.2}
&\mathbb{E}\lVert \mathcal{K}_n(\varsigma,y_n(\varsigma))-\mathcal{K}_m(\varsigma,y_m(\varsigma)) \rVert^2\nonumber\\
 &\leq 2\bigg[\mathbb{E} \lVert \mathcal{K}_n(\varsigma,y_n(\varsigma))-\mathcal{K}_n(\varsigma,y_m(\varsigma)) \rVert^2+ \mathbb{E}\lVert \mathcal{K}_n(\varsigma,y_m(\varsigma))-\mathcal{K}_m(\varsigma,y_m(\varsigma)) \rVert^2 \bigg]\nonumber\\
& \leq 2L_\mathcal{K}\bigg[\mathbb{E}\lVert y_n(\varsigma)-y_m(\varsigma) \rVert^2_{\beta}+\mathbb{E}\lVert (\mathcal{P}^n-\mathcal{P}^m)y_m(\varsigma) \rVert^2_{\beta}\bigg]\nonumber \\
& \leq 2L_\mathcal{K}\bigg[\lVert y_n-y_m \rVert^2_{PC,\beta}+\frac{M^{\prime}}{\lambda_m^{2(\eta-\beta)}} \bigg].
\end{align}
Similarly, $(\textbf{A3})$ and (\ref{4.1}) imply that
\begin{align}\label{4.3}
&\mathbb{E}\lVert \mathcal{N}_n(\varsigma,y_n(\varsigma))-\mathcal{N}_m(\varsigma,y_m(\varsigma)) \rVert^2_{L_2^0} \nonumber\\
&\leq 2 \bigg[\mathbb{E}\lVert \mathcal{N}_n(\varsigma,y_n(\varsigma)) - \mathcal{N}_n(\varsigma,y_m(\varsigma)) \rVert^2_{L_2^0} \quad +\mathbb{E}\lVert \mathcal{N}_n(\varsigma,y_m(\varsigma))-\mathcal{N}_m(\varsigma,y_m(\varsigma)) \rVert^2_{L_2^0} \bigg] \nonumber\\
&\leq 2L_\mathcal{N}(s) \bigg[\mathbb{E}\lVert y_n(\varsigma)-y_m(\varsigma) \rVert^2_{\beta}+\mathbb{E}\lVert (\mathcal{P}^n-\mathcal{P}^m)y_m(\varsigma) \rVert^2_{\beta}\bigg]\nonumber \\
&\leq  2L_\mathcal{N}(s) \bigg[\lVert y_n-y_m \rVert^2_{PC,\beta}+\frac{M^{\prime}}{\lambda_m^{2(\eta-\beta)}} \bigg].
\end{align}
Also, $(\textbf{A4})$ and (\ref{4.1}) yields that
\begin{align}\label{4.4}
&\mathbb{E}\lVert h_{k,n}^i(s,y_n(s_k^-))- h_{k,m}^i(s,y_m(s_k^-)) \rVert^2_{\beta} \nonumber\\
&\leq 2\bigg[\mathbb{E} \lVert h_{k,n}^i(s,y_n(s_k^-))-h_{k,n}^i(s,y_m(s_k^-)) \rVert^2_{\beta} + \mathbb{E} \lVert h_{k,n}^i(s,y_m(s_k^-))-h_{k,m}^i(s,y_m(s_k^-)) \rVert^2_{\beta} \bigg] \nonumber \\
&\leq 2D_{h_k^i} \bigg[ \mathbb{E}\lVert y_n(s_k^-)) - y_m(s_k^-)) \rVert^2_{\beta}+\mathbb{E} \lVert (\mathcal{P}^n-\mathcal{P}^m)y_m(s_k^-) \rVert^2_{\beta} \bigg] \nonumber\\
& \leq 2D_{h_k^i} \bigg[\lVert y_n-y_m \rVert^2_{PC,\beta}+\frac{M^{\prime}}{\lambda_m^{2(\eta-\beta)}} \bigg].
\end{align}
Now, for any $s \in (0,s_1]$, by using (\ref{4.2}) and (\ref{4.3}) we compute
\begin{align}
&\mathbb{E} \lVert y_n(s)-y_m(s) \rVert^2_{\beta} \\
& \qquad\quad\leq 2 \bigg\{ \int_0^s \lVert \mathcal{A}^{\beta-1} \rVert^2 \lVert \mathcal{A}P_{\alpha}(s-\varsigma) \rVert^2 \bigg[\mathbb{E}\lVert \mathcal{K}_n(\varsigma,y_n(\varsigma)) - \mathcal{K}_m(\varsigma,y_m(\varsigma)) \rVert^2 + 4Tr(Q) \nonumber\\
&\qquad\qquad\times \int_0^T |a(\varsigma-r)|^2 \mathbb{E}\lVert \mathcal{N}_n(r,y_n(r))-\mathcal{N}_m(r,y_m(r)) \rVert^2_{L_2^0} dr \bigg]d{\varsigma} \bigg\}\nonumber\\
&\qquad\quad\leq 4\lVert \mathcal{A}^{\beta-1} \rVert^2 \rho^2s_1(L_\mathcal{K}+4TrQ \lVert a^2 \rVert_{L^p{(0,T)}}\lVert L_\mathcal{N} \rVert_{L^q(0,T)}) \nonumber \\
& \qquad\qquad \times \big(\lVert y_n-y_m \rVert^2_{PC,\beta}+\frac{M^{\prime}}{\lambda_m^{2(\eta-\beta)}}\big)\nonumber.
\end{align}
Hence
\begin{align}\label{4.5}
\lVert y_n-y_m \rVert^2_{PC,\beta} & \leq \frac{2M^{\prime}Q_0}{(1-2Q_0)}\frac{1}{\lambda_m^{2(\eta-\beta)}}.
\end{align}
Similarly, for any $s \in (s_k,\varsigma_k], k = 1,2,\cdots,q,$ we use (\ref{4.4}) and compute
\begin{align}
\mathbb{E}\lVert y_n(s)-y_m(s) \rVert^2_{\beta} &= \mathbb{E}\lVert h_{k,n}^1(s,y_n(s_k^-))-h_{k,m}^1(s,y_m(s_k^-)) \rVert^2_{\beta} \nonumber\\
&\leq 2 D_{h_k^1} \bigg[\lVert y_n-y_m \rVert^2_{PC,\beta}+\frac{M^{\prime}}{\lambda_m^{2(\eta-\beta)}} \bigg].\nonumber
\end{align}
Hence
\begin{align}\label{4.6}
\lVert y_n-y_m \rVert^2_{PC,\beta} \leq \frac{2D_{h_k^1}M^{\prime}}{(1-2D_{h_k^1})}\frac{1}{\lambda_m^{2(\eta-\beta)}}.
\end{align} 
Also, for any $s \in (\varsigma_k,s_{k+1}], k = 1,2,\cdots,q$, we use (\ref{4.2})-(\ref{4.4}) and compute
\begin{align}\label{4.7}
&\mathbb{E}\lVert y_n(s)-y_m(s) \rVert^2_{\beta} \nonumber \\
& \leq 4\mathbb{E}\bigg\{\lVert C_{\alpha}(s-\varsigma_k) [h_{k,n}^1(\varsigma_k,y_n(s_k^-))-h_{k,m}^1(\varsigma_k,y_m(s_k^-))] \rVert^2_{\beta} \nonumber \\
&\quad+\lVert S_{\alpha}(s-\varsigma_k)[h_{k,n}^2(\varsigma_k,y_n(s_k^-)) -h_{k,m}^2(\varsigma_k,y_m(s_k^-))] \rVert^2_{\beta}\nonumber\\
& \quad + \int_{\varsigma_k}^s \lVert \mathcal{A}^{\beta}P_{\alpha}(s-\varsigma)[\mathcal{K}_n(\varsigma,y_n(\varsigma))-\mathcal{K}_m(\varsigma,y_m(\varsigma))] \rVert^2 d{\varsigma}\nonumber \\
&\quad +  \int_{\varsigma_k}^s \lVert \mathcal{A}^{\beta}P_{\alpha}(s-\varsigma)\int_0^\varsigma a(\varsigma-r)[\mathcal{N}_n(r,y_n(r))-\mathcal{N}_m(r,y_m(r))]dW(r) \rVert^2 d{\varsigma}\bigg\}. \nonumber\\
& = 4(\xi_1+\xi_2+\xi_3+\xi_4).
\end{align}
We calculate the values of $\xi_1, \xi_2, \xi_3$ and $\xi_4$, as follows,
\begin{align}\label{4.8}
\xi_1 & \leq  \lVert C_{\alpha}(s-\varsigma_k) \rVert^2 \mathbb{E}\lVert h_{k,n}^1(\varsigma_k,y_n(s_k^-)) - h_{k,m}^1(\varsigma_k,y_m(s_k^-)) \rVert^2_{\beta} \nonumber \\
& \leq 2M^2 D_{h_k^1}[\lVert y_n-y_m \rVert^2_{PC,\beta}+\frac{M^{\prime}}{\lambda_m^{2(\eta-\beta)}}].
\end{align}
\begin{align}\label{4.9}
\xi_2 & \leq \lVert S_{\alpha}(s-\varsigma_k) \rVert^2 \mathbb{E}\lVert h_{k,n}^2(\varsigma_k,y_n(s_k^-)) - h_{k,m}^2(\varsigma_k,y_m(s_k^-)) \rVert^2_{\beta} \nonumber \\
& \leq 2M^2 T^2 D_{h_k^2}[\lVert y_n-y_m \rVert^2_{PC,\beta}+\frac{M^{\prime}}{\lambda_m^{2(\eta-\beta)}}]
\end{align}
\begin{align}\label{4.10}
\xi_3 &\leq  \int_{\varsigma_k}^s \lVert \mathcal{A}^{\beta-1} \rVert^2  \lVert \mathcal{A}P_{\alpha}(s-\varsigma) \rVert^2 \mathbb{E} \lVert \mathcal{K}_n(\varsigma,y_n(\varsigma)) - \mathcal{K}_m(\varsigma,y_m(\varsigma)) \rVert^2 d{\varsigma} \nonumber \\
& \leq 2\lVert \mathcal{A}^{\beta-1} \rVert^2 \rho^2 s_{k+1}L_\mathcal{K} [\lVert y_n-y_m \rVert^2_{PC,\beta}+\frac{M^{\prime}}{\lambda_m^{2(\eta-\beta)}}]
\end{align}
\begin{align}\label{4.11}
\xi_4 &\leq \int_{\varsigma_k}^s \lVert \mathcal{A}^{\beta-1} \rVert^2  \lVert \mathcal{A}P_{\alpha}(s-\varsigma) \rVert^2 \mathbb{E} \lVert \int_0^s a(\varsigma-r) [\mathcal{N}_n(\varsigma,y_n(\varsigma))-\mathcal{N}_m(\varsigma,y_m(\varsigma))] \nonumber\\
&\quad \times dW(r) \rVert^2 d\varsigma \nonumber \\
& \leq 4Tr(Q) \lVert \mathcal{A}^{\beta-1} \rVert^2 \rho^2 s_{k+1}\int_0^T |a(\varsigma-r)|^2 \mathbb{E}\lVert \mathcal{N}_n(r,y_n(r))-\mathcal{N}_m(r,y_m(r)) \rVert^2_{L_2^0} dr\nonumber\\
& \leq 8 Tr(Q)\lVert \mathcal{A}^{\beta-1} \rVert^2 \rho^2 s_{k+1}\lVert a^2 \rVert_{L^p(0,T)} \lVert L_\mathcal{N} \rVert_{L^q(0,T)}[\lVert y_n-y_m \rVert^2_{PC,\beta}+\frac{M^{\prime}}{\lambda_m^{2(\eta-\beta)}}].
\end{align}
From (\ref{4.8})-(\ref{4.11}), we get the values of $\xi_1, \xi_2, \xi_3$ and $\xi_4$. On substituting these values in (\ref{4.7}), we obtain
\begin{align}\label{4.12}
\lVert y_n-y_m \rVert^2_{PC,\beta} \leq \frac{2M^{\prime}Q_k}{(1-2Q_k)} \frac{1}{\lambda_m^{2(\eta-\beta)}}.
\end{align}
Therefore, from inequalities (\ref{4.5}), (\ref{4.6}) and (\ref{4.12})
\begin{equation*}
\lim_{m \rightarrow \infty} \sup\limits_{\{n \geq m, 0 \leq s \leq T\}}\mathbb{E}\lVert y_n(s)-y_m(s) \rVert^2_{\beta} = 0.
\end{equation*}
Since $\frac{1}{\lambda_m^{2(\eta-\beta)}} \rightarrow 0$ as $m \rightarrow \infty$. Hence, we get the desired result. 
\end{proof}
Now, we can establish the following convergence theorem with the aid of Theorem \ref{thrm3.1} and Theorem~\ref{thrm4.1}.
\begin{theorem}\label{thrm4.2}
Let all the assumption $\textbf{(A1)-(A4)}$ hold and $y_0, z_0 \in D(\mathcal{A})$. Then there exists a unique $y_n \in PC_T^{\beta}$ and $y$ satisfying (\ref{2.3}) such that $y_n \rightarrow y$ in $PC_T^{\beta}$ as $n \rightarrow \infty$.
\end{theorem}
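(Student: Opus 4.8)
The plan is to glue together Theorem~\ref{thrm3.1} and Theorem~\ref{thrm4.1} and then check that the limit function actually solves the untruncated equation (\ref{2.3}). By Theorem~\ref{thrm3.1}, for each $n\in\mathbb{N}$ there is a unique $y_n\in B_R(PC_T^\beta)$ solving the truncated integral identity (\ref{3.13}); by Theorem~\ref{thrm4.1} the sequence $\{y_n\}$ is Cauchy in the Banach space $PC_T^\beta$, hence converges to some $y$, and since $B_R(PC_T^\beta)$ is closed we have $y\in B_R(PC_T^\beta)$. Moreover each $y_n$ has its possible discontinuities only at the fixed points $s_1,\dots,s_q$, so the uniform ($\|\cdot\|_{PC,\beta}$) limit $y$ is again left-continuous on $[0,T]$ with right limits at those points; thus $y\in PC_T^\beta$.

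The core of the argument is to pass to the limit $n\to\infty$ in each branch of (\ref{3.13}) and recover the corresponding branch of (\ref{2.3}). Two elementary facts are used repeatedly: $\|\mathcal{P}^n\|\le 1$, and $\mathcal{A}^{\beta}\mathcal{P}^n z\to\mathcal{A}^{\beta}z$ for every $z\in\mathcal{H}_\beta$ (strong convergence of the spectral projections, valid because $\bigcup_n\mathcal{H}_n$ is dense, by \textbf{(A1)}). Writing $\mathcal{P}^n y_n-y=\mathcal{P}^n(y_n-y)+(\mathcal{P}^n-I)y$ these give $\mathcal{P}^n y_n(s_k^-)\to y(s_k^-)$ in $L_2(\Omega;\mathcal{H}_\beta)$, and then continuity of $h_k^i$ in \textbf{(A4)} yields $y(s)=h_k^1(s,y(s_k^-))$ on $(s_k,\varsigma_k]$ and $y'(s)=h_k^2(s,y(s_k^-))$; the $C_\alpha,S_\alpha$ impulse terms on $(\varsigma_k,s_{k+1}]$ are treated the same way, while on $(0,s_1]$ the $C_\alpha(s)y_0$ and $S_\alpha(s)z_0$ terms do not depend on $n$. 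For the deterministic convolution term one reuses the estimate derived as (\ref{4.2}), i.e.\ $\mathbb{E}\|\mathcal{K}_n(\varsigma,y_n(\varsigma))-\mathcal{K}(\varsigma,y(\varsigma))\|^2\le 2L_\mathcal{K}\bigl(\|y_n-y\|^2_{PC,\beta}+M^{\prime}\lambda_n^{-2(\eta-\beta)}\bigr)\to 0$, and multiplies by $\|\mathcal{A}^{\beta-1}\|^2\rho^2 s_{k+1}$ to control $\mathbb{E}\|\int P_\alpha(s-\varsigma)[\mathcal{K}_n-\mathcal{K}]\,d\varsigma\|_\beta^2$ uniformly in $s$. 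For the stochastic convolution one combines Lemma~\ref{lemma}, the Hölder pairing $\tfrac1p+\tfrac1q=1$ applied to $|a(\varsigma-\cdot)|^2$ and $L_\mathcal{N}(\cdot)$, and estimate (\ref{4.3}) for $\mathbb{E}\|\mathcal{N}_n(r,y_n(r))-\mathcal{N}(r,y(r))\|^2_{L_2^0}$, mimicking the computation of $\xi_4$ in the proof of Theorem~\ref{thrm4.1}; this shows the stochastic term of $\phi_n y_n$ converges, uniformly in $s$, to the corresponding term in (\ref{2.3}). Collecting all the limits, $y$ satisfies (\ref{2.3}).

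Uniqueness of $y$ is obtained from the same contraction estimates underlying Theorem~\ref{thrm3.1}: if $y,\tilde y\in PC_T^\beta$ both satisfy (\ref{2.3}), then repeating the bounds (\ref{3.4}), (\ref{3.5}) and (\ref{3.7})--(\ref{3.11}) with $\mathcal{K},\mathcal{N},h_k^i$ in place of $\mathcal{K}_n,\mathcal{N}_n,h_{k,n}^i$ (the projections play no role) gives, on each subinterval, $\|y-\tilde y\|^2_{PC,\beta}\le D\|y-\tilde y\|^2_{PC,\beta}$, and since $D<1$ we conclude $y=\tilde y$. The convergence $y_n\to y$ in $PC_T^\beta$ as $n\to\infty$ is precisely the statement proved in Theorem~\ref{thrm4.1}.

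I expect the main obstacle to be the uniform-in-$s$ passage to the limit in the stochastic convolution term: one must verify that the $L^p$--$L^q$ splitting of $|a(\varsigma-\cdot)|^2\,L_\mathcal{N}(\cdot)$ together with the It\^o-type isometry of Lemma~\ref{lemma} yields a bound that is independent of $s$ and tends to $0$ as $n\to\infty$ — the bookkeeping parallels the derivation of $\xi_4$ but must be carried out with the genuine, untruncated nonlinearities. A secondary technical point requiring care is the strong convergence $\mathcal{A}^{\beta}\mathcal{P}^n z\to\mathcal{A}^{\beta}z$ on $\mathcal{H}_\beta$, which rests on the spectral decomposition of $\mathcal{A}$ in \textbf{(A1)} and the density of $\bigcup_n\mathcal{H}_n$ in $\mathcal{H}$.
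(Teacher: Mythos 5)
Your proposal is correct and follows essentially the same route as the paper: combine Theorem~\ref{thrm3.1} (existence of each $y_n$) with Theorem~\ref{thrm4.1} (Cauchyness), then pass to the limit in each branch of (\ref{3.13}) by splitting $\mathcal{K}_n(\cdot,y_n)-\mathcal{K}(\cdot,y)$, $\mathcal{N}_n(\cdot,y_n)-\mathcal{N}(\cdot,y)$ and $h^i_{k,n}(\cdot,y_n)-h^i_k(\cdot,y)$ exactly as in the paper's estimates (\ref{4.13})--(\ref{4.15}), the only cosmetic difference being that you bound the projection error by $M'\lambda_n^{-2(\eta-\beta)}$ where the paper uses $\lVert(\mathcal{P}^n-I)y\rVert^2_{PC,\beta}\to 0$ directly. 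Your explicit contraction argument for uniqueness of $y$ is a small addition the paper leaves implicit, and it is sound given $D<1$.
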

\begin{proof}
One can see existence and convergence of $y_n$ on [0,T] in Theorem \ref{thrm3.1} and Theorem \ref{thrm4.1}. Now, it only remains to show that
the limit $y$ of $y_n$ is given by system (\ref{2.3}). We have the following inequalities
\begin{align}\label{4.13}
&\mathbb{E}\lVert \mathcal{K}_n(\varsigma,y_n(\varsigma))-\mathcal{K}(\varsigma,y(\varsigma)) \rVert^2 \nonumber \\
&\leq 2 \bigg[\mathbb{E}\lVert \mathcal{K}_n(\varsigma,y_n(\varsigma))-\mathcal{K}_n(\varsigma,y(\varsigma)) \rVert^2+ \mathbb{E}\lVert \mathcal{K}_n(\varsigma,y(\varsigma))-\mathcal{K}(\varsigma,y(\varsigma)) \rVert^2 \bigg] \nonumber\\
&\leq 2L_\mathcal{K}\bigg[\mathbb{E}\lVert y_n(\varsigma)-y(\varsigma) \rVert^2_{\beta}+\mathbb{E} \lVert (\mathcal{P}^n-I)y(\varsigma) \rVert^2_{\beta}\bigg]\nonumber\\
&\leq 2L_\mathcal{K}\bigg[ \lVert y_n-y \rVert^2_{PC,\beta}+\lVert (\mathcal{P}^n-I)y \rVert^2_{PC,\beta}\bigg].
\end{align}
Also, using the assumption $(\textbf{A3})$
\begin{align}\label{4.14}
&\mathbb{E}\lVert \mathcal{N}_n(s,y_n(s))-\mathcal{N}(s,y(s)) \rVert^2_{L_2^0} \nonumber\\
&\leq 2 \bigg[\mathbb{E}\lVert \mathcal{N}_n(s,y_n(s))-\mathcal{N}_n(s,y(s)) \rVert^2_{L_2^0}+ \mathbb{E}\lVert \mathcal{N}_n(s,y(s))-\mathcal{N}(s,y(s)) \rVert^2_{L_2^0} \bigg] \nonumber\\
&\leq 2L_\mathcal{N}(s)\bigg[\mathbb{E}\lVert y_n(s)-y(s) \rVert^2_{\beta}+\mathbb{E} \lVert (\mathcal{P}^n-I)y(s) \rVert^2_{\beta}\bigg]\nonumber\\
&\leq 2L_\mathcal{N}(s)\bigg[ \lVert y_n-y \rVert^2_{PC,\beta}+\lVert (\mathcal{P}^n-I)y \rVert^2_{PC,\beta}\bigg].
\end{align}
Similarly, for any $s \in (s_k,\varsigma_k], k = 1,2,\cdots,q, \ i =1,2$, and the assumption $(\textbf{A4})$ implies that
\begin{align}\label{4.15}
\mathbb{E}\lVert h_{k,n}^i(s,y_n(s_k^-))-h_{k}^i(s,y(s_k^-)) \rVert^2_{\beta} &\leq 2D_{h_k^i}\bigg[\lVert y_n-y \rVert^2_{PC,\beta}+\lVert (P^n-I)y \rVert^2_{PC,\beta}\bigg].
\end{align}
Since $y_n \rightarrow y$ and $\mathcal{P}^n y \rightarrow y$ as $n \rightarrow \infty$ therefore the right hand side of (\ref{4.13}) - (\ref{4.15}) converges to zero as $n \rightarrow \infty$. 
Therefore, for any $s \in (0,s_1]$
\begin{align*}
\mathbb{E}\lVert y_n(s)-y(s) \rVert^2_{\beta} &\leq 2\bigg\{\int_0^s \lVert \mathcal{A}^{\beta-1} \rVert^2 \lVert \mathcal{A}P_{\alpha}(s-\varsigma) \rVert^2 \bigg[\mathbb{E}\lVert \mathcal{K}_n(\varsigma,y_n(\varsigma))-\mathcal{K}(\varsigma,y(\varsigma)) \rVert^2 \nonumber \\
&\quad + 4Tr(Q) \int_0^T |a(\varsigma-r)|^2 \mathbb{E}\lVert \mathcal{N}_n(r,y_n(r)) \nonumber \\
&\quad-\mathcal{N}(r,y(r)) \rVert^2_{L_2^0} dr \bigg]d{\varsigma} \bigg\} \rightarrow 0 \ \text{as} \ n \rightarrow \infty.
\end{align*}
Also, for any $s \in (\varsigma_k,s_{k+1}], k = 1,2,\cdots,q$
\begin{align}
\mathbb{E}\lVert y_n(s)-y(s) \rVert^2_{\beta} &\leq 4\bigg[\lVert C_{\alpha}(s-\varsigma_k) \rVert^2 \mathbb{E}\lVert h_{k,n}^1(\varsigma_k,y_n(s_k^-))-h_k(\varsigma_k,y(s_k^-)) \rVert^2_{\beta}\nonumber\\
&\quad  + \lVert S_{\alpha}(s-\varsigma_k) \rVert^2 \mathbb{E}\lVert h_{k,n}^2(\varsigma_k,y_n(s_k^-))-h_{k}^2(\varsigma_k,y(s_k^-)) \rVert^2_{\beta} \nonumber\\
& \quad+\int_{\varsigma_k}^s \lVert \mathcal{A}^{\beta}P_{\alpha}(s-\varsigma) \rVert^2 \mathbb{E}\lVert \mathcal{K}_n(\varsigma,y_n(\varsigma))-\mathcal{K}(\varsigma,y(\varsigma)) \rVert^2 d\varsigma \nonumber\\
&\quad + \int_{\varsigma_k}^\varsigma \lVert \mathcal{A}^{\beta}P_{\alpha}(s-\varsigma) \rVert^2 (4TrQ \int_0^T |a(\varsigma-r)|^2 \nonumber\\
& \quad \times \mathbb{E}\lVert \mathcal{N}_n(r,y_n(r))-\mathcal{N}(r,y(r)) \rVert^2_{L_2^0} dr)d\varsigma \ \rightarrow 0 \ \text{as} \ n \rightarrow \infty.\nonumber 
\end{align}
As a result of the preceding inequalities, we may deduce that $y$ is given by system (\ref{2.3}). The proof
is now complete.
\end{proof}
\section{Faedo-Galerkin approximations}
If we project (\ref{3.13}) onto $\mathcal{H}_n$, we get a finite dimensional approximation i.e. $\bar{y}_n = \mathcal{P}^ny_n$, which is
known as F-G approximate solution satisfying
\begin{equation}\bar{y}_n(s)=
\begin{cases}
C_{\alpha}(s)\mathcal{P}^ny_0 + S_{\alpha}(s)\mathcal{P}^nz_0 + \int_0^s P_{\alpha}(s-\varsigma)[\mathcal{P}^n\mathcal{K}_n(\varsigma,y_n(\varsigma))\\
\qquad\qquad+\int_0^\varsigma a(\varsigma-r)\mathcal{P}^n\mathcal{N}_n(r,y_n(r)) dW(r)]d\varsigma,\ s \in (0,s_1];\\
\mathcal{P}^nh_{k,n}^1(s,y_n(s_k^-)), \ s \in (s_k,\varsigma_k], \ k = 1,2,\cdots,q;\\
 C_{\alpha}(s-\varsigma_k)\mathcal{P}^nh_{k,n}^1(\varsigma_k,y_n(s_k^-))+S_{\alpha}(s-\varsigma_k)\mathcal{P}^nh_{k,n}^2(\varsigma_k,y_n(s_k^-))\\ 
\quad +\int_{\varsigma_k}^s P_{\alpha}(s-\varsigma)[\mathcal{P}^n \mathcal{K}_n(\varsigma,y_n(\varsigma))+\int_0^{\varsigma} a(\varsigma-r)\mathcal{P}^n \mathcal{N}_n(r,y_n(r)) \\
\qquad\qquad\qquad \times  dW(r)]d\varsigma, \ s \in (\varsigma_k,s_{k+1}], \ k = 1,2,\cdots,q.
\end{cases}
\end{equation}
Solutions $y$ and $\bar{y}_n$ given by (\ref{2.3}) and (\ref{3.13}) respectively, have the following representations
\begin{align*}
 y(s) &= \sum_{j=0}^{\infty} \eta_j(s) \psi_j, \quad  \eta_j(s) = \langle  y(s), \psi_j \rangle , \  j= 0,1,2,\cdots;\\
\bar{y}_n(s) &= \sum_{j=0}^{n} \eta_j^n(s) \psi_j, \quad \eta_j^n(s) = \langle \bar{y}_n(s), \psi_j \rangle, \  j = 0,1,2,\cdots,n.
\end{align*}
\begin{theorem}\label{thrm5.1}
Let the assumptions $\textbf{(A1)-(A4)}$ satisfied and $y_0, z_0 \in D(\mathcal{A})$. Then,
\begin{eqnarray*}
\lim_{m \rightarrow \infty} \sup_{\{n \geq m, \ 0 \leq s \leq T\}} \mathbb{E} \lVert \bar{y}_n(s) - \bar{y}_m(s) \rVert_{\beta}^2 = 0.
\end{eqnarray*}
\begin{proof}
For $n \geq m$, we have
\begin{align*}
\mathbb{E}\lVert \bar{y}_n(s) -\bar{y}_m(s) \rVert_{\beta}^2 &= \mathbb{E} \lVert \mathcal{P}^n y_n(s) - \mathcal{P}^m y_m(s) \rVert_{\beta}^2 \\
& \leq 2\mathbb{E}\bigg[\lVert \mathcal{P}^n (y_n(s) - y_m(s)) \rVert_{\beta}^2 + \lVert (\mathcal{P}^n - \mathcal{P}^m ) y_m(s) \rVert_{\beta}^2 \bigg] \\
& \leq 2\bigg[\mathbb{E} \lVert y_n(s) - y_m(s) \rVert_{\beta}^2 + \frac{M^{\prime}}{\lambda_m^{2(\eta-\beta)}} \bigg].
\end{align*}
Now take the supremum over all $0 \leq s \leq T$, we get 
\begin{eqnarray*}
\sup_{\{0 \leq s \leq T\}}\mathbb{E} \lVert \bar{y}_n(s) - \bar{y}_m(s) \rVert_{\beta}^2 \leq 2\bigg[ \lVert y_n-y_m \rVert^2_{PC,\beta} + \frac{M^{\prime}}{\lambda_m^{2(\eta-\beta)}}\bigg].
\end{eqnarray*}
Thus, we achieve the desired outcome by using the Theorem (\ref{thrm4.1}).
\end{proof}
\end{theorem}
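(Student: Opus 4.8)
The plan is to reduce everything to the Cauchy property of the pre-images $\{y_n\}$ already proved in Theorem~\ref{thrm4.1}, using only the defining relation $\bar{y}_n = \mathcal{P}^n y_n$. Fix $n \geq m$ and $s \in [0,T]$ and split
\begin{equation*}
\bar{y}_n(s) - \bar{y}_m(s) = \mathcal{P}^n y_n(s) - \mathcal{P}^m y_m(s) = \mathcal{P}^n\big(y_n(s) - y_m(s)\big) + \big(\mathcal{P}^n - \mathcal{P}^m\big) y_m(s).
\end{equation*}
Since the $\psi_j$ are eigenfunctions of $\mathcal{A}$, the projection $\mathcal{P}^n$ commutes with $\mathcal{A}^{\beta}$, so $\lVert \mathcal{P}^n z \rVert_{\beta} \leq \lVert z \rVert_{\beta}$; hence, by $(a+b)^2 \leq 2a^2 + 2b^2$,
\begin{equation*}
\mathbb{E}\lVert \bar{y}_n(s) - \bar{y}_m(s) \rVert_{\beta}^2 \leq 2\,\mathbb{E}\lVert y_n(s) - y_m(s) \rVert_{\beta}^2 + 2\,\mathbb{E}\lVert (\mathcal{P}^n - \mathcal{P}^m) y_m(s) \rVert_{\beta}^2.
\end{equation*}

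For the second term I would reuse the computation that produced (\ref{4.1}): picking $0 < \beta < \eta < 1$ and writing $(\mathcal{P}^n - \mathcal{P}^m) y_m(s) = \mathcal{A}^{\beta-\eta}(\mathcal{P}^n - \mathcal{P}^m)\mathcal{A}^{\eta-\beta} y_m(s)$, the bound $\lVert \mathcal{A}^{\beta-\eta}(\mathcal{P}^n - \mathcal{P}^m) \rVert \leq \lambda_m^{-(\eta-\beta)}$ together with the $n$-independent a priori estimate $\mathbb{E}\lVert \mathcal{A}^{\eta} y_m(s) \rVert^2 \leq M'$ of Section~3 gives
\begin{equation*}
\mathbb{E}\lVert (\mathcal{P}^n - \mathcal{P}^m) y_m(s) \rVert_{\beta}^2 \leq \frac{M'}{\lambda_m^{2(\eta-\beta)}},
\end{equation*}
uniformly in $s \in [0,T]$ and in $n \geq m$. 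Taking the supremum over $s$ and then over $n \geq m$ then yields
\begin{equation*}
\sup_{\{n \geq m,\, 0 \leq s \leq T\}} \mathbb{E}\lVert \bar{y}_n(s) - \bar{y}_m(s) \rVert_{\beta}^2 \leq 2\,\sup_{n \geq m}\lVert y_n - y_m \rVert_{PC,\beta}^2 + \frac{2M'}{\lambda_m^{2(\eta-\beta)}}.
\end{equation*}

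Finally I would let $m \to \infty$: the first term on the right vanishes by Theorem~\ref{thrm4.1} and the second because $\lambda_m \to \infty$ by \textbf{(A1)}, which gives the claim. There is no genuine obstacle here; the argument is essentially a one-line projection decomposition followed by two estimates already in hand. The only point that requires care is \emph{uniformity}: the constant $M'$ from the a priori lemma must not depend on $n$, and the factor in (\ref{4.1}) must not depend on $s$, so that the bound above genuinely depends only on $m$ before the limit is taken. Granting that — which is exactly how those results were stated — the conclusion is immediate.
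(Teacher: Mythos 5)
Your proposal is correct and follows essentially the same route as the paper: the same decomposition $\mathcal{P}^n y_n - \mathcal{P}^m y_m = \mathcal{P}^n(y_n - y_m) + (\mathcal{P}^n - \mathcal{P}^m)y_m$, the same bound on the second term via (\ref{4.1}) and the a priori estimate $M'$, and the same appeal to Theorem~\ref{thrm4.1}. Your explicit remarks on the uniformity of $M'$ in $n$ and $s$ are a welcome clarification but do not change the argument.
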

Next, the existence of F-G approximate solution and their convergence can be demonstrated by the following theorem. 
\begin{theorem}\label{thrm5.2}
Let $\textbf{(A1)-(A4)}$ satisfied. Then there exist functions $\bar{y}_n \in PC^{\beta}_T$ and $y \in PC^{\beta}_T$ such that $\bar{y}_n \rightarrow y$ as $n \rightarrow \infty$ in $PC^{\beta}_T$.
\end{theorem}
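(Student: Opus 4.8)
The plan is to combine the Cauchy property of $\{\bar{y}_n\}$ proved in Theorem~\ref{thrm5.1} with the convergence $y_n \to y$ of Theorem~\ref{thrm4.2}, the bridge between the two being a uniform-in-$n$ estimate of $\bar{y}_n - y_n$. Since $PC^{\beta}_T$ is a Banach space, Theorem~\ref{thrm5.1} already guarantees that $\{\bar{y}_n\}$ converges in $PC^{\beta}_T$ to some limit; the remaining content is merely to identify that limit with the mild solution $y$ furnished by Theorem~\ref{thrm4.2}. Rather than re-deriving the limiting integral equation for $\bar{y}_n$ directly, I would show that $\lVert \bar{y}_n - y_n \rVert_{PC,\beta} \to 0$ and then conclude by the triangle inequality.

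For the key estimate, write $\bar{y}_n(s) - y_n(s) = \mathcal{P}^n y_n(s) - y_n(s) = (\mathcal{P}^n - I)y_n(s)$ and fix $\eta$ with $0 < \beta < \eta < 1$. I would invoke the spectral bound already used in Section~\ref{section 4}, namely $\lVert \mathcal{A}^{\beta-\eta}(\mathcal{P}^n - I)z \rVert^2 \leq \lambda_n^{-2(\eta-\beta)}\lVert z \rVert^2$ (valid since $\mathcal{P}^n - I$ projects onto $\mathcal{H}_n^{\perp}$ and $\beta - \eta < 0$), applied to $z = \mathcal{A}^{\eta}y_n(s)$, together with the a priori bound $\mathbb{E}\lVert \mathcal{A}^{\eta}y_n(s)\rVert^2 \leq M^{\prime}$ established in Section~3, whose constant $M^{\prime}$ is independent of $n$. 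This yields
\begin{equation*}
\sup_{0 \leq s \leq T}\mathbb{E}\lVert \bar{y}_n(s) - y_n(s) \rVert_{\beta}^2 = \sup_{0 \leq s \leq T}\mathbb{E}\lVert \mathcal{A}^{\beta-\eta}(\mathcal{P}^n - I)\mathcal{A}^{\eta}y_n(s) \rVert^2 \leq \frac{M^{\prime}}{\lambda_n^{2(\eta-\beta)}} \longrightarrow 0
\end{equation*}
as $n \to \infty$, because $\lambda_n \to \infty$.

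To finish, Theorem~\ref{thrm4.2} supplies $y \in PC^{\beta}_T$ with $y_n \to y$ in $PC^{\beta}_T$, so that
\begin{equation*}
\lVert \bar{y}_n - y \rVert_{PC,\beta} \leq \lVert \bar{y}_n - y_n \rVert_{PC,\beta} + \lVert y_n - y \rVert_{PC,\beta} \longrightarrow 0,
\end{equation*}
and since $\bar{y}_n \in PC^{\beta}_T$ by construction and $y \in PC^{\beta}_T$ by Theorem~\ref{thrm4.2}, this is exactly the assertion. I expect the main (and essentially only) delicate point to be the uniform control of $\mathcal{A}^{\eta}y_n$ needed to pass from $y_n$ to $\bar{y}_n = \mathcal{P}^n y_n$; but that bound is already available from Section~3, so the rest of the argument is a straightforward reassembly of the estimates of Sections~\ref{section 4} and~5.
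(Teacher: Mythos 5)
Your proof is correct, and it is actually more complete than the one in the paper. The paper disposes of this theorem in one line, declaring it a ``consequence of Theorem \ref{thrm4.2} and Theorem \ref{thrm5.1}'': Theorem \ref{thrm5.1} makes $\{\bar{y}_n\}$ Cauchy, hence convergent in the Banach space $PC^{\beta}_T$, and Theorem \ref{thrm4.2} gives $y_n \rightarrow y$; but the identification of the limit of $\bar{y}_n$ with $y$ is left entirely implicit. Your argument supplies exactly that missing bridge: the estimate $\sup_{0 \leq s \leq T}\mathbb{E}\lVert (\mathcal{P}^n - I)y_n(s)\rVert_{\beta}^2 \leq M^{\prime}\lambda_n^{-2(\eta-\beta)}$ is the same spectral bound the paper uses in (\ref{4.1}) and in the proof of Theorem \ref{thrm5.1} (with $\mathcal{P}^m$ replaced by $I$, which only improves the constant since $I - \mathcal{P}^n$ projects onto $\mathcal{H}_n^{\perp} \subset \mathcal{H}_m^{\perp}$), and the requisite uniform bound $\mathbb{E}\lVert \mathcal{A}^{\eta}y_n(s)\rVert^2 \leq M^{\prime}$ is Lemma 3.3. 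A pleasant by-product of your route is that Theorem \ref{thrm5.1} becomes logically superfluous for this statement: $\lVert \bar{y}_n - y_n\rVert_{PC,\beta} \rightarrow 0$ together with $y_n \rightarrow y$ already yields $\bar{y}_n \rightarrow y$ by the triangle inequality, without invoking the Cauchy property of $\{\bar{y}_n\}$ at all. The one small point worth making explicit is that $\mathcal{A}^{\beta}(\mathcal{P}^n - I)y_n(s) = \mathcal{A}^{\beta-\eta}(\mathcal{P}^n - I)\mathcal{A}^{\eta}y_n(s)$ because the spectral projections $\mathcal{P}^n$ commute with the fractional powers of $\mathcal{A}$, and that $y_n(s) \in D(\mathcal{A}^{\eta})$ is guaranteed by Corollary 3.2; with those remarks your write-up is complete.
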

\begin{proof}
 This theorem can be proved by the consequence of Theorem \ref{thrm4.2} and Theorem \ref{thrm5.1}.
\end{proof}
In the next theorem convergence for the sequence $\{\eta_j^n(s)\}$ is shown.
\begin{theorem}
Let the assumptions $\textbf{(A1)-(A4)}$ satisfied and $y_0,z_0 \in D(\mathcal{A})$. Then,
\begin{equation*}
\lim_{n \rightarrow \infty} \sup_{\{0 \leq s \leq T\}} \bigg( \sum_{j =0}^n \lambda_j^{2 \beta}\mathbb{E}| \eta_j^n(s)-\eta_j(s)|^2 \bigg) = 0.
\end{equation*}
\end{theorem}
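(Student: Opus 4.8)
The plan is to observe that the weighted sum $\sum_{j=0}^n\lambda_j^{2\beta}\mathbb{E}|\eta_j^n(s)-\eta_j(s)|^2$ is exactly the squared $\mathcal{H}_\beta$-norm of a finite-dimensional projection of the error $y_n(s)-y(s)$, and then invoke the convergence $y_n\to y$ in $PC_T^\beta$ already established in Theorem~\ref{thrm4.2}. First, since $\{\psi_j\}$ is the complete orthonormal system of eigenfunctions of $\mathcal{A}$ from \textbf{(A1)}, we have $\mathcal{A}^\beta\psi_j=\lambda_j^\beta\psi_j$. Using the expansions $\bar y_n(s)=\sum_{j=0}^n\eta_j^n(s)\psi_j$ and $\mathcal{P}^ny(s)=\sum_{j=0}^n\eta_j(s)\psi_j$ (where $\eta_j(s)=\langle y(s),\psi_j\rangle$), I would apply $\mathcal{A}^\beta$ termwise and use Parseval's identity to get
\[ \sum_{j=0}^n\lambda_j^{2\beta}\mathbb{E}\bigl|\eta_j^n(s)-\eta_j(s)\bigr|^2=\mathbb{E}\bigl\lVert\mathcal{A}^\beta\bigl(\bar y_n(s)-\mathcal{P}^ny(s)\bigr)\bigr\rVert^2=\mathbb{E}\bigl\lVert\bar y_n(s)-\mathcal{P}^ny(s)\bigr\rVert_\beta^2. \]

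Next, since $\bar y_n=\mathcal{P}^ny_n$ by definition of the F-G approximation and $\mathcal{P}^n$ is linear, one has $\bar y_n(s)-\mathcal{P}^ny(s)=\mathcal{P}^n(y_n(s)-y(s))$. Because $\mathcal{P}^n$ is the spectral projection associated with $\mathcal{A}$, it commutes with $\mathcal{A}^\beta$ on $D(\mathcal{A}^\beta)=\mathcal{H}_\beta$ and is a contraction, so
\[ \bigl\lVert\bar y_n(s)-\mathcal{P}^ny(s)\bigr\rVert_\beta=\bigl\lVert\mathcal{P}^n\mathcal{A}^\beta\bigl(y_n(s)-y(s)\bigr)\bigr\rVert\le\bigl\lVert\mathcal{A}^\beta\bigl(y_n(s)-y(s)\bigr)\bigr\rVert=\lVert y_n(s)-y(s)\rVert_\beta. \]
Taking expectations and then the supremum over $s\in[0,T]$ yields
\[ \sup_{0\le s\le T}\sum_{j=0}^n\lambda_j^{2\beta}\mathbb{E}\bigl|\eta_j^n(s)-\eta_j(s)\bigr|^2\le\lVert y_n-y\rVert_{PC,\beta}^2, \]
and the right-hand side converges to $0$ as $n\to\infty$ by Theorem~\ref{thrm4.2}, which gives the claim.

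I do not expect a genuine obstacle here: the argument is essentially a bookkeeping reduction to Theorem~\ref{thrm4.2}. The only point requiring a little care is justifying that $\mathcal{P}^n$ commutes with the fractional power $\mathcal{A}^\beta$ and that Parseval's identity holds in the $\mathcal{H}_\beta$-norm — both are immediate from \textbf{(A1)}, since $\mathcal{A}$ admits the complete orthonormal eigenbasis $\{\psi_j\}$ with $\lambda_j\to\infty$, so that $\mathcal{A}^\beta z=\sum_j\lambda_j^\beta\langle z,\psi_j\rangle\psi_j$ on its domain and $\mathcal{P}^n$ merely truncates this series. One could alternatively route the proof through Theorem~\ref{thrm5.2} (convergence $\bar y_n\to y$), but that would additionally demand the uniform-in-$s$ estimate $\lVert(\mathcal{P}^n-I)y\rVert_{PC,\beta}\to0$; writing the error as $\mathcal{P}^n(y_n-y)$ sidesteps this.
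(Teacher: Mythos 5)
Your proof is correct, and its core is the same Parseval computation in the eigenbasis of $\mathcal{A}$ that the paper uses: both arguments convert the weighted coefficient sum $\sum_{j=0}^n\lambda_j^{2\beta}\mathbb{E}|\eta_j^n(s)-\eta_j(s)|^2$ into the squared $\mathcal{H}_\beta$-norm of a difference and then quote an already-proved convergence theorem. The difference is in the comparison object. The paper bounds the partial sum by the full series $\mathbb{E}\lVert\bar y_n(s)-y(s)\rVert_\beta^2$ (the tail terms $j>n$, where $\eta_j^n\equiv 0$, are simply discarded) and invokes Theorem \ref{thrm5.2}, i.e.\ the convergence $\bar y_n\to y$ in $PC_T^\beta$. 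You instead observe that the truncated sum equals $\mathbb{E}\lVert\mathcal{P}^n(y_n(s)-y(s))\rVert_\beta^2\le\mathbb{E}\lVert y_n(s)-y(s)\rVert_\beta^2$, so only Theorem \ref{thrm4.2} (convergence of $y_n$ itself) is needed. Your route is marginally cleaner: it bypasses Theorem \ref{thrm5.2}, whose proof the paper only sketches as ``a consequence of Theorem \ref{thrm4.2} and Theorem \ref{thrm5.1}'' and which implicitly requires the uniform estimate $\lVert(\mathcal{P}^n-I)y_n\rVert_{PC,\beta}\to 0$ that you correctly flag as the extra cost of that path. The commutation of $\mathcal{P}^n$ with $\mathcal{A}^\beta$ and the contractivity of $\mathcal{P}^n$, which your bound relies on, are indeed immediate from \textbf{(A1)}. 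Both proofs are valid; yours trades the paper's one-line inequality for a slightly more economical dependency structure.
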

\begin{proof}
\begin{align*}
\mathbb{E}\lVert \bar{y}_n(s)-y(s)  \rVert_{\beta}^2 &= \ \mathbb{E} \lVert \mathcal{A}^{\beta}\big[\bar{y}_n(s)-y(s)\big] \rVert^2 \\
 &= \mathbb{E} \lVert \mathcal{A}^{\beta}\bigg[ \sum_{j = 0}^{\infty}\big(\eta_j^n(s)-\eta_j(s)\big) \psi_{j}\bigg] \rVert^2 \\
&= \sum_{j = 0}^{\infty} \lambda_{j}^{2\beta}\mathbb{E}\lVert (\eta_j^n(s)-\eta_j(s)) \psi_{j} \rVert^2.
\end{align*}
Therefore,
\begin{align*}
 \sum_{j = 0}^n \lambda_j^{2 \beta} \lVert \eta_j^n(s)-\eta_j(s) \rVert^2 \leq \ \mathbb{E} \lVert \big[\bar{y}_n(s)-y(s) \big] \rVert^2_{\beta}.
\end{align*}
The requisite result derives from a consequence of Theorem \ref{thrm5.2}.
\end{proof}
\section{Example}\label{Section 5}
This part contains an example that demonstrates the feasibility of our abstract results.\\
We apply the abstract result of this manuscript to a partial differential equation. Let $\mathcal{H} = L^2(0,1)$, we study the following abstract impulsive differential equation of order $\alpha \in (1,2]$: 
\begin{align}\label{5.1}
^c \mathbf{D}_s^{\alpha}\mathcal{V}(s,\xi)  &= \frac{\partial^2}{\partial \xi^2}\mathcal{V}(s,\xi)+ \frac{s}{10(1+s)}\mathcal{V}(s,\xi)+\int_0^s a(s-\varsigma) \frac{e^{-\varsigma}|\mathcal{V}(\varsigma,\xi)|}{3(1+|\mathcal{V}(\varsigma,\xi)|)} dW(\varsigma), \nonumber\\
&\qquad\qquad\qquad\qquad\qquad\qquad\qquad\qquad (s,\xi) \in (\varsigma_k,s_{k+1}] \times (0,1), \nonumber \\
\mathcal{V}(s,\xi) &= \frac{1}{(2k+1)}\sin (ks+\mathcal{V}(s_k^-,\xi)), \ (s,\xi) \in (s_k,\varsigma_k]\times (0,1),\ k = 1,2,\cdots,q, \nonumber  \\
\frac{\partial}{\partial s}\mathcal{V}(s,\xi) &= \big(\frac{k}{2k+1}\big) \cos (ks+\mathcal{V}(s_k^-,\xi)), \ (s,\xi) \in (s_k,\varsigma_k]\times (0,1),\ k = 1,2,\cdots,q, \nonumber \\
\mathcal{V}(s,0) &= \mathcal{V}(s,1) = 0, \quad s \in [0,T], \quad 0 < T < \infty,  \\
\mathcal{V}(0, \xi) &= y_0, \ \frac{\partial}{\partial s}\mathcal{V}(0,\xi) = z_0, \quad \xi \in (0,1), \nonumber
\end{align}
where $y_0, z_0 \in \mathcal{H}$, $W$ is the standard $L^2[0,1]$-valued Weiner process, $a \in L_{loc}^{2p}(0,\infty), 1 \leq p < \infty$ and $0 = \varsigma_0 < s_1 < \varsigma_1 < \cdots < \varsigma_k < s_{k+1} = T$ are some real numbers.\\
We define an operator $\mathcal{A} : D(\mathcal{A}) \subset \mathcal{H} \rightarrow \mathcal{H}$, as follows,
\begin{equation}\label{5.2}
\mathcal{A}\mathcal{V} = \frac{\partial^2 \mathcal{V}}{\partial \xi^2} = \mathcal{V}^{\prime \prime} \ \text{with} \ D(\mathcal{A}) = \{ \mathcal{V} \in \mathcal{H}_0^1(0,1) \cap   \mathcal{H}^2(0,1) : \mathcal{V}^{\prime \prime} \in \mathcal{H} \ \text{and} \ \mathcal{V}(0) = \mathcal{V}(1) = 0\}.
\end{equation} 
Now, let $\beta = \frac{1}{2}$, $D(\mathcal{A}^\frac{1}{2})$ is a Banach space equipped with the norm
\begin{equation}
\parallel y \parallel_{\frac{1}{2}} = \ \parallel \mathcal{A}^{\frac{1}{2}}y \parallel, \quad y \ \in D(\mathcal{A}^\frac{1}{2}),
\end{equation}
and we enlighten this space by $\mathcal{H}_\frac{1}{2} = \mathcal{H}_0^1(0,1).$
For $\lambda \in \mathbb{R}$ and $y \in D(\mathcal{A})$ with $\mathcal{A}y = y^{\prime \prime} = \lambda y,$ we get $\langle \mathcal{A}y,y \rangle = \langle \lambda y, y \rangle$; that is
\begin{equation*}
\langle y^{\prime \prime},y \rangle = -|y^{\prime}|^2_{L^2} = \lambda |y|^2_{L^2}.
\end{equation*}
Therefore, $\lambda < 0$. The general solution of this homogeneous equation is given by
\begin{equation*}
y(\xi) = A \ \text{cos}(\sqrt{\lambda}\xi) + B \ \text{sin}(\sqrt{\lambda}\xi),
\end{equation*}
and the boundary conditions require that $A = 0$ and spectrum $\lambda = \lambda_n = -n^2 \pi^2$, $n \in \mathbb{N}$. Thus, the corresponding solutions (orthonormal eigenfunctions) for each $n \in \mathbb{N}$ are given by
\begin{equation*}
y_n(\xi) = \sqrt{2} \ \text{sin}(n \pi \xi).
\end{equation*}
Since $D(\mathcal{A})$ is a separable Hilbert space therefore there exists a sequence of real numbers $\gamma_{n}$ such that
\begin{equation*}
y(\xi) = \sum_{n = 1}^{\infty} \gamma_n y_n(\xi), \quad \text{ for any} \quad y \in D(\mathcal{A})
\end{equation*}
with
\begin{equation*}
\sum\limits_{n = 1}^{\infty} \gamma_n^2 < \infty \quad \text{and} \quad \sum\limits_{n = 1}^{\infty} \lambda_n^2 \gamma_n^2 < \infty.
\end{equation*}
For $y \in D(\mathcal{A}^\frac{1}{2})$, we have
\begin{equation*}
\mathcal{A}^{\frac{1}{2}}y(\xi) = \sum_{n = 1}^{\infty} \lambda^{\frac{1}{2}}_n \gamma_n y_n(\xi)
\end{equation*}
that is $\sum_{n = 1}^{\infty} \lambda_n \gamma_n^2 < \infty$.\\
Here, $\gamma_n = \langle y,y_n \rangle$. For $y \in D(\mathcal{A})$, operator $\mathcal{A}$ has the following series representation:
\begin{equation}
\mathcal{A}y= -\sum_{n = 1}^{\infty} n^2{\pi}^2 \langle y, y_n \rangle y_n. \nonumber
\end{equation}
Since a strongly continuous cosine family $C(s)_{s \in \mathbb{R}}$ on $\mathcal{H}$ has the infinitesimal generator $\mathcal{A}$. Therefore
\begin{equation}
C(s)y = \sum_{n=1}^{\infty} \cos (n{\pi}s) \langle y,y_n \rangle y_n, \quad y \in \mathcal{H}, \nonumber
\end{equation}
 and the associated sine family $S(s)_{s \in \mathbb{R}}$ on $\mathcal{H}$ is given by
\begin{equation*}
S(s)y = \sum_{n=1}^{\infty} \frac{1}{n{\pi}} \sin (n{\pi}s) \langle y, y_n \rangle y_n, \quad y \in \mathcal{H}.
\end{equation*}
To learn more about cosine family theory, we refer the readers to Fattorini \cite{7}, Travis \& Webb \cite{8}.\\
For $\alpha \in (1,2]$, from the subordinate principal \cite{1}, $\mathcal{A}$ is the infinitesimal generator of fractional cosine family $C_{\alpha}(s)$(strongly continuous and exponentially bounded) such that $C_{\alpha}(0)=I$, and 
\begin{equation}
C_{\alpha}(s) = \int_0^{\infty} \phi_{s,\frac{\alpha}{2}}(t)C(t)dt, \quad s > 0, \nonumber
\end{equation}
where
\begin{equation}
\phi_{s,\frac{\alpha}{2}}(t) = s^{\frac{-\alpha}{2}} \phi_{\frac{\alpha}{2}}(ts^{\frac{-\alpha}{2}}), \nonumber
\end{equation}
and 
\begin{equation}
\phi_{\gamma}(z) = \sum_{n=0}^{\infty} \frac{(-z)^n}{n!\Gamma(-n\gamma-\gamma+1)}, \quad \gamma \in (0,1). \nonumber
\end{equation}
The system (\ref{5.1}) can be transformed into the following abstract FSIDE in $\mathcal{H} = L^2(0,1)$:
\begin{align*}\label{5.4}
^{c}\mathbf{D}_s^{\alpha}y(s) &= \mathcal{A} y(s) + \mathcal{K}(s,y(s)) + \int_0^s a(s-\varsigma)\mathcal{N}(\varsigma,y(\varsigma)) dW(\varsigma), \quad s \in (\varsigma_k, s_{k+1}], \nonumber \\
&\qquad\qquad\qquad\qquad\qquad\qquad\qquad k = 0,1,2, \cdots,q, \nonumber \\
y(s) &= h_k^1(s,y(s_k^-)), \quad s \in (s_k,\varsigma_k], \ k =1,2,\cdots,q, \nonumber \\
y^{\prime}(s) &= h_k^2(s,y(s_k^-)), \quad s \in (s_k,\varsigma_k], \ k =1,2,\cdots,q, \nonumber \\
y(0) &= y_0, \quad y^{\prime}(0) = z_0
\end{align*}
where $\mathcal{V}(s,\cdot)=y(s)$ that is $y(s)(\xi) = \mathcal{V}(s,\xi)$, $(s,\xi) \in [0,T] \times (0,1)$. Eq. (\ref{5.2}) defines the operator $\mathcal{A}$.\\
We define the operators by\\
\vspace{2mm}
(i) $\mathcal{K} : [0,T] \times \mathcal{H}_{\frac{1}{2}} \rightarrow \mathcal{H}$ is given by
\begin{align*}
\mathcal{K}(s,y)(\xi) = \frac{s}{10(1+s)}y(s,\xi)
\end{align*}
(ii) $\mathcal{N} : [0,T] \times \mathcal{H}_{\frac{1}{2}} \rightarrow L_2^0$ is given by
\begin{align*}
\mathcal{N}(s,y)(\xi) = \frac{e^{-s}|y(s,\xi)|}{3(1+|y(s,\xi)|)}
\end{align*}
(iii) $h_k^1 : (s_k,\varsigma_k] \times \mathcal{H}_{\frac{1}{2}} \rightarrow \mathcal{H}_{\frac{1}{2}}, \ k = 1,2,\cdots,q$
\begin{align*}
h_k^1(s,y(s_k^-))(\xi) = \frac{1}{(2k+1)} \sin (ks+y(s_k^-,\xi))
\end{align*}
(iv) $h_k^2 : (s_k,\varsigma_k] \times \mathcal{H}_{\frac{1}{2}} \rightarrow \mathcal{H}_{\frac{1}{2}},\ k = 1,2,\cdots,q$
\begin{align*}
h_k^2(s,y(s_k^-))(\xi) = \frac{k}{(2k+1)} \cos (ks+y(s_k^-,\xi)).
\end{align*}
Now, for each $s \in [0,T]$ and $y_1,y_2 \in \mathcal{H}$, we compute\\
\begin{align}
\mathbb{E}\lVert \mathcal{K}(s,y_1)-\mathcal{K}(s,y_2) \rVert^2 &= \mathbb{E}\bigg[\int_0^1 \bigg |\frac{s}{10(1+s)}y_1(s,\xi)-\frac{s}{10(1+s)}y_2(s,\xi)\bigg |^2 d{\xi}\bigg] \nonumber \\
& \leq \frac{1}{100} \mathbb{E} \bigg[ \int_0^1 \big|y_1(s,\xi)-y_2(s,\xi) \big|^2 d{\xi} \big] \nonumber \\
& \leq \frac{1}{100} \lVert \mathcal{A}^{-\frac{1}{2}} \rVert^2 \mathbb{E} \lVert y_1-y_2 \rVert^2_{\frac{1}{2}} \nonumber
\end{align}
and
\begin{align}
\mathbb{E}\lVert \mathcal{K}(s,y) \rVert^2 &\leq  \theta.\nonumber
\end{align}
Thus the assumption $\textbf{(A2)}$ is satisfied with $L_\mathcal{K} = \frac{1}{100} \lVert \mathcal{A}^{-\frac{1}{2}} \rVert^2$ and $L_\mathcal{K}^{\prime} = \theta$, where $\theta = \frac{1}{100}\lVert \mathcal{A}^{-\frac{1}{2}} \rVert^2\mathbb{E} \lVert \mathcal{A}^{\frac{1}{2}} y \rVert^2$.  Similarly, $L_\mathcal{N}(s) = \frac{e^{-2s}}{9} \lVert \mathcal{A}^{-\frac{1}{2}} \rVert^2$, $L_\mathcal{N}^{\prime}(s) = \frac{e^{-2s}}{9}$ and q = 2. Also, $\textbf{(A4)}$ is satisfied with $ D_{h_k^1} = C_{h_k^1} = \frac{1}{(2k+1)^2}$ and $D_{h_k^2} = C_{h_k^2} = \big(\frac{k}{2k+1}\big)^2$.\\
Next, in particular we consider a case of a control system governed by the following integro-differential equation in the space $\mathcal{H} = \mathbb{R}$.
\begin{align*}
y^{\prime\prime}(s)+a_1y(s)+a_2 \sin s &= \int_0^s a(s-\varsigma)e^{-(s-\varsigma)}g(\varsigma,y(\varsigma)) dW(\varsigma), \  s \in (\varsigma_k,s_{k+1}],\nonumber \\
&\qquad\qquad\qquad\qquad\qquad\qquad\qquad k = 1,2,\cdots,q, \\
y(s) &= a_3 \tanh(y(s_k^-))r(s), \ s \in (s_k,\varsigma_k], k = 1,2,\cdots,q,\\
y^{\prime}(s) &= a_3 \tanh(y(s_k^-))r^{\prime}(s), \ s \in (s_k,\varsigma_k], k = 1,2,\cdots,q,\\
y(0) &= x_0, \quad y^{\prime}(0) = y_0,
\end{align*}
where $a_1 \in \mathbb{R}^+$, $a_2,a_3 \in \mathbb{R}$, $g \in C([0,T] \times \mathbb{R};\mathbb{R})$ and $r \in C([0,T];\mathbb{R})$. We define $\mathcal{A}$ as follows $\mathcal{A}y = -a_1 y$ with $D(\mathcal{A}) = \mathbb{R}$. $C(s) = \cos (\sqrt{a_1}s)$ is the strongly continuous cosine family and the associated sine family is given by $S(s) = \frac{1}{\sqrt{a_1}} \sin (\sqrt{a_1}s)$. The control function is given by $a_2 \sin s$ and the functions $a_3 \tanh(y(s_k^-))r(s)$ and $a_3 \tanh(y(s_k^-))r^{\prime}(s)$ represent non-instantaneous impulses during the interval $(s_k,\varsigma_k],\ k = 1,2,\cdots,q$. 
\section{Conclusion}
In this article, we consider a class of non-instantaneous FSIDE involving Caputo fractional derivative, in an arbitrary separable Hilbert space. We have successfully established the existence, uniqueness and convergence results for the approximate solutions to system (\ref{main1}). To show the existence of solutions to every approximate integral
equation, we use theory of fractional cosine family of linear operators, fixed point technique and stochastic equations in infinite dimensions. The approximate solutions of finite dimension for the given system (\ref{main1}) are constructed by using the projection operator. After that, we prove that a Cauchy sequence is constructed by these approximate solutions, and they converge to the solution of the original problem with respect to a suitable norm. Moreover, we proved some existence and convergence results for such approximations with the help of F-G approximation. At last, an example is given to show the effectiveness of the main theory. Our findings can be extended to investigate the approximation of solution to Hilfer fractional stochastic non-instantaneous impulsive system for future research.

\section*{ORCID}
\emph{Muslim Malik} https://orcid.org/0000-0003-0055-7581 \vspace{2mm}\\
\emph{Shahin Ansari} https://orcid.org/0000-0001-7842-547X

\end{document}